\setlist[enumerate,1]{label=(\roman*)} 
\newcommand\cyr
\renewcommand\rmdefault{wncyr}
\renewcommand\sfdefault{wncyss}
\renewcommand\encodingdefault{OT2}
\DeclareTextFontCommand{\textcyr}{\cyr}
\newcommand\dd{^\textnormal{dd}}
\newcommand\hs{\hspace*{0.1cm}}
\def\mid|{\hs\hs\middle|\hs\hs}
\newcommand{\Sup}{\vee}		
\newcommand{\Inf}{\wedge}	%
\newcommand{\B}{\mathcal{B}}
\newcommand{\I}{\mathcal{I}}
\newcommand\lin{\textnormal{lin}\hspace*{-0.1cm}}
\newcommand{\NN}{\mathbb{N}}	
\newcommand{\RR}{\mathbb{R}} 
\newcommand{\1}{\mathds{1}}					
\newcommand{\sub}{\subseteq}								
\renewcommand\t{\textnormal}
\newcommand{\ohne}{\backslash}
\newcommand{\up}{\uparrow}
\renewcommand{\geq}{\geqslant}	
\renewcommand{\leq}{\leqslant}
\let\int\relax 
\DeclareMathOperator{\int}{int}
\newcommand\eps{\varepsilon}
\renewcommand\phi{\varphi}
\renewcommand\rho{\varrho}
\def\chi{\mathbb{1}}
\renewcommand\c{\cite }
\newcommand{\disp}{\displaystyle}		
\newcounter{Zaehler}
\theoremstyle{plain}
\newtheorem{theorem}{Theorem}
\newtheorem{lemma}[theorem]{Lemma}
\newtheorem{corollary}[theorem]{Corollary}
\newtheorem{proposition}[theorem]{Proposition}
\newtheorem{definition}[theorem]{Definition}			
\theoremstyle{plain}
\newtheorem{example}[theorem]{Example}
\newtheorem{numremark}[theorem]{Remark}
\theoremstyle{nonumberplain}
\newtheorem{proof}{Proof}
\begin{document}
\title{Order closed ideals in pre-Riesz spaces and their relationship to bands}
\author{Helena Malinowski\footnote{H. Malinowski, Technische Universit\"at Dresden, Germany \hspace*{2mm}\Letter\hspace*{2mm}lenamalinowski@gmx.de}}

\date{Accepted: January 2019}

\maketitle
\begin{abstract}
In Archimedean vector lattices bands can be introduced via three different coinciding notions. First, they are order closed ideals. Second, they are precisely those ideals which equal their double disjoint complements. The third concept is that of an ideal which contains the supremum of any of its bounded subsets, provided the supremum exists in the vector lattice. We investigate these three notions and their relationships in the more general setting of Archimedean pre-Riesz spaces.
We introduce the notion of a supremum closed ideal, which is related to the third aforementioned notion in vector lattices.
We show that for a directed ideal $I$ in a pervasive pre-Riesz space with the Riesz decomposition property these three concepts coincide, provided the double disjoint complement of $I$ is directed. In pervasive pre-Riesz spaces every directed band is supremum closed and every supremum closed directed ideal $I$ equals its double disjoint complement, provided the double disjoint complement of $I$ is directed.
In general, in Archimedean pre-Riesz spaces the three notions differ. For this we provide appropriate counterexamples.
\par\smallskip
\textbf{Keywords:} partially ordered vector space, order dense subspace, pre-Riesz space, order closed, ideal, band, pervasive
\par\smallskip
\textbf{Mathematics Subject Classification:} 06F20, 46A40
\end{abstract}


\section{Introduction}
In Archimedean vector lattices bands can be introduced in three different ways. Classically, they are defined as order closed ideals. Moreover, bands are precisely those linear subspaces, which equal their double disjoint complement. A third notion of a band can be found in \c[Definition~17.1(iv)]{Zaa1}. There, a band is defined as an ideal containing the supremum of any of its bounded subsets, whenever the supremum exists in the vector lattice. In Archimedean vector lattices these three notions are equivalent. We investigate the relationships of these notions in Archimedean pre-Riesz spaces.

Pre-Riesz spaces are precisely those (partially) ordered vector spaces that can be order densely embedded into vector lattices. They were introduced 1993 in \c{vanHaa} by van Haandel. Later pre-Riesz spaces and structures therein were thoroughly investigated by Kalauch, Lemmens and van Gaans in \c{Kalauch,5,6,1,3,2,4,7} and \c{9}.
The definition of disjointness in pre-Riesz spaces was first given in \c{1}. Based on this notion, bands were introduced as sets which equal their double disjoint complement. In \c{2} the authors investigate ideals and bands and establish the following result.
\setcounter{theorem}{-1}
\begin{theorem}{\textnormal{\cite[Theorem~5.14]{2}}}\label{KavanGaa}
Let $X$ be a pre-Riesz space. Then every band in $X$ is an order closed ideal.
\end{theorem}
This result raises the question, whether and under which conditions the converse is true, i.e.\ under which conditions every order closed (o-closed) ideal is a band. 
Motivated by this question, we introduce in Section 3 supremum closed (s-closed) ideals and characterize s-closed directed ideals in pervasive pre-Riesz spaces. We show that in a pervasive pre-Riesz space every directed band is an s-closed ideal and every s-closed directed ideal $I$ is a band, provided the double disjoint complement $I\dd$ is directed. In an example we see that the condition of $I\dd$ being directed can not be omitted.
In Section 4 we give conditions under which the converse of Theorem~\ref{KavanGaa} is true: We show that in a pervasive pre-Riesz space with RDP every o-closed directed ideal $I$ is a band, provided $I\dd$ is directed. In a pre-Riesz space with RDP every directed band and every o-closed directed ideal is s-closed. In a pervasive pre-Riesz space every s-closed directed ideal is o-closed, provided $I\dd$ is directed. In an example we demonstrate that the condition of $I\dd$ being directed can not be omitted. Several examples show that in pre-Riesz spaces the notions of a band, an o-closed ideal and an s-closed ideal do not coincide, in general.
The following schemata summarize the results. Grey arrows represent the already known statements. Blue arrows indicate implications which are true under the additional condition that the double disjoint complement is directed. Example numbers attached to blue arrows refer to the given counterexamples in case this additional condition is not satisfied. In the last diagram a green arrow means that the implication is true even under the condition of RDP (i.e.\ the pre-Riesz space need not be pervasive).
%
%
\begin{center}

\begin{picture}(0,0)%
\includegraphics{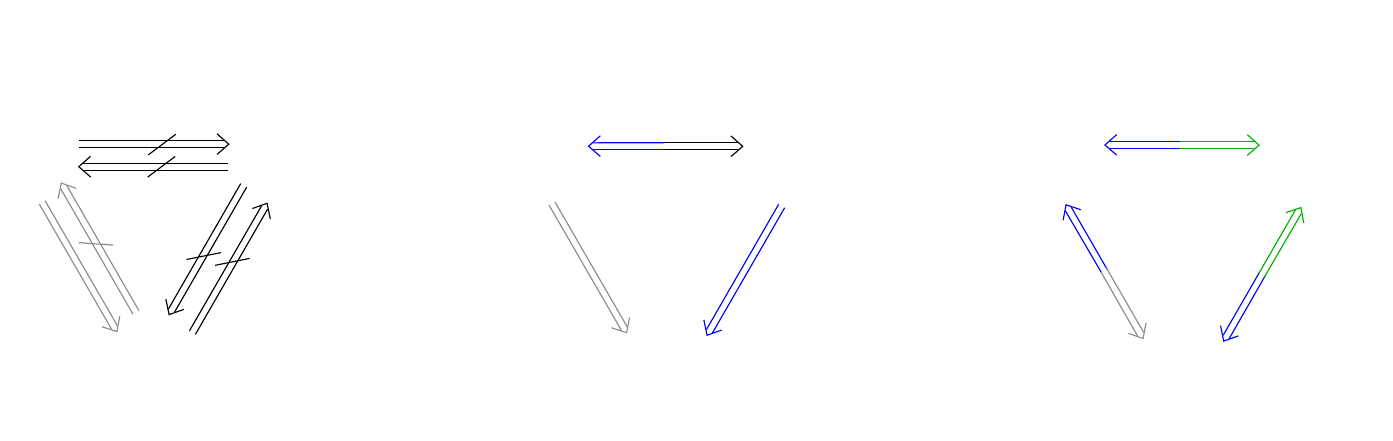}
\end{picture}%
\setlength{\unitlength}{2901sp}%
\begingroup\makeatletter\ifx\SetFigFont\undefined%
\gdef\SetFigFont#1#2#3#4#5{%
  \reset@font\fontsize{#1}{#2pt}%
  \fontfamily{#3}\fontseries{#4}\fontshape{#5}%
  \selectfont}%
\fi\endgroup%
\begin{picture}(9089,2859)(656,-2007)
\put(1708,-410){\makebox(0,0)[b]{\smash{{\SetFigFont{8}{9.6}{\familydefault}{\mddefault}{\updefault}{\color[rgb]{0,0,0}\tiny{Ex.\hspace{.02cm}\ref{ex}}}%
}}}}
\put(1667, 47){\makebox(0,0)[b]{\smash{{\SetFigFont{8}{9.6}{\familydefault}{\mddefault}{\updefault}\tiny{Ex.\hspace{0.02cm}\ref{bands_not_s-closed}}}}}}
\put(5041,570){\makebox(0,0)[b]{\smash{{\SetFigFont{8}{9.6}{\familydefault}{\mddefault}{\updefault}{\color[rgb]{0,0,0}pervasive pre-Riesz}%
}}}}
\put(4186,-195){\makebox(0,0)[b]{\smash{{\SetFigFont{8}{9.6}{\familydefault}{\mddefault}{\updefault}{\color[rgb]{0,0,0}band}%
}}}}
\put(8371,570){\makebox(0,0)[b]{\smash{{\SetFigFont{8}{9.6}{\familydefault}{\mddefault}{\updefault}{\color[rgb]{0,0,0}pervasive pre-Riesz with RDP}%
}}}}
\put(7455,-671){\rotatebox{300.0}{\makebox(0,0)[b]{\smash{{\SetFigFont{8}{9.6}{\familydefault}{\mddefault}{\updefault}{\color[rgb]{0,0,0}\tiny{Thm.\hspace{.02cm}\ref{2.1}}}%
}}}}}
\put(4280,-932){\rotatebox{300.0}{\makebox(0,0)[b]{\smash{{\SetFigFont{8}{9.6}{\familydefault}{\mddefault}{\updefault}{\color[rgb]{0,0,0}\tiny{\c[5.14]{2}}}%
}}}}}
\put(5356, 30){\makebox(0,0)[b]{\smash{{\SetFigFont{8}{9.6}{\familydefault}{\mddefault}{\updefault}{\color[rgb]{0,0,0}\tiny{Cor.\hspace*{0.02cm}\ref{1.4}}}%
}}}}
\put(4636, 30){\makebox(0,0)[b]{\smash{{\SetFigFont{8}{9.6}{\familydefault}{\mddefault}{\updefault}{\color[rgb]{0,0,0}\tiny{Thm.\hspace*{0.02cm}\ref{1.3}}}%
}}}}
\put(8011, 37){\makebox(0,0)[b]{\smash{{\SetFigFont{8}{9.6}{\familydefault}{\mddefault}{\updefault}{\color[rgb]{0,0,0}\tiny{Thm.\hspace{.02cm}\ref{1.3}}}%
}}}}
\put(4703,-328){\makebox(0,0)[b]{\smash{{\SetFigFont{8}{9.6}{\familydefault}{\mddefault}{\updefault}{\color[rgb]{0,0,0}\tiny{Ex.\hspace*{0.02cm}\ref{Namioka1}}}%
}}}}
\put(8675,-1046){\rotatebox{60.0}{\makebox(0,0)[b]{\smash{{\SetFigFont{8}{9.6}{\familydefault}{\mddefault}{\updefault}{\color[rgb]{0,0,0}\tiny{Ex.\hspace*{.02cm}\ref{Namioka2}}}%
}}}}}
\put(5394,-891){\rotatebox{60.0}{\makebox(0,0)[b]{\smash{{\SetFigFont{8}{9.6}{\familydefault}{\mddefault}{\updefault}{\color[rgb]{0,0,0}\tiny{Ex.\hspace*{.02cm}\ref{Namioka2}}}%
}}}}}
\put(856,-188){\makebox(0,0)[b]{\smash{{\SetFigFont{8}{9.6}{\familydefault}{\mddefault}{\updefault}{\color[rgb]{0,0,0}band}%
}}}}
\put(1711,570){\makebox(0,0)[b]{\smash{{\SetFigFont{8}{9.6}{\familydefault}{\mddefault}{\updefault}{\color[rgb]{0,0,0}pre-Riesz}%
}}}}
\put(1036,-953){\rotatebox{300.0}{\makebox(0,0)[b]{\smash{{\SetFigFont{8}{9.6}{\familydefault}{\mddefault}{\updefault}{\color[rgb]{0,0,0}\tiny{\c[5.14]{2}}}%
}}}}}
\put(2339,-989){\rotatebox{60.0}{\makebox(0,0)[b]{\smash{{\SetFigFont{8}{9.6}{\familydefault}{\mddefault}{\updefault}{\color[rgb]{0,0,0}\tiny{Ex.\hspace*{.02cm}\ref{ex1}}}%
}}}}}
\put(2656,-188){\makebox(0,0)[b]{\smash{{\SetFigFont{8}{9.6}{\familydefault}{\mddefault}{\updefault}{\color[rgb]{0,0,0}s-closed}%
}}}}
\put(1666,-1673){\makebox(0,0)[b]{\smash{{\SetFigFont{8}{9.6}{\familydefault}{\mddefault}{\updefault}{\color[rgb]{0,0,0}o-closed}%
}}}}
\put(5986,-195){\makebox(0,0)[b]{\smash{{\SetFigFont{8}{9.6}{\familydefault}{\mddefault}{\updefault}{\color[rgb]{0,0,0}s-closed}%
}}}}
\put(4996,-1680){\makebox(0,0)[b]{\smash{{\SetFigFont{8}{9.6}{\familydefault}{\mddefault}{\updefault}{\color[rgb]{0,0,0}o-closed}%
}}}}
\put(8371,-1718){\makebox(0,0)[b]{\smash{{\SetFigFont{8}{9.6}{\familydefault}{\mddefault}{\updefault}{\color[rgb]{0,0,0}o-closed}%
}}}}
\put(9361,-233){\makebox(0,0)[b]{\smash{{\SetFigFont{8}{9.6}{\familydefault}{\mddefault}{\updefault}{\color[rgb]{0,0,0}s-closed}%
}}}}
\put(7561,-233){\makebox(0,0)[b]{\smash{{\SetFigFont{8}{9.6}{\familydefault}{\mddefault}{\updefault}{\color[rgb]{0,0,0}band}%
}}}}
\put(7817,-1288){\rotatebox{300.0}{\makebox(0,0)[b]{\smash{{\SetFigFont{8}{9.6}{\familydefault}{\mddefault}{\updefault}{\color[rgb]{0,0,0}\tiny{\c[5.14]{2}}}%
}}}}}
\put(8144,-304){\makebox(0,0)[b]{\smash{{\SetFigFont{8}{9.6}{\familydefault}{\mddefault}{\updefault}{\color[rgb]{0,0,0}\tiny{Ex.\hspace*{0.02cm}\ref{Namioka1}}}%
}}}}
\put(9133,-990){\rotatebox{60.0}{\makebox(0,0)[b]{\smash{{\SetFigFont{8}{9.6}{\familydefault}{\mddefault}{\updefault}{\color[rgb]{0,0,0}\tiny{Cor.\hspace{.02cm}\ref{2.2} \ref{2.2.it2}, \ref{2.2.it1}}}%
}}}}}
\put(5724,-1023){\rotatebox{60.0}{\makebox(0,0)[b]{\smash{{\SetFigFont{8}{9.6}{\familydefault}{\mddefault}{\updefault}{\color[rgb]{0,0,0}\tiny{Cor.\hspace{.02cm}\ref{2.2} \ref{2.2.it2}}}%
}}}}}
\put(8731, 37){\makebox(0,0)[b]{\smash{{\SetFigFont{8}{9.6}{\familydefault}{\mddefault}{\updefault}{\color[rgb]{0,0,0}\tiny{Cor.\hspace{.02cm}\ref{2.2cor}}}%
}}}}
\put(1421,-728){\rotatebox{300.0}{\makebox(0,0)[b]{\smash{{\SetFigFont{8}{9.6}{\familydefault}{\mddefault}{\updefault}\tiny{\c[\hspace*{-0.02cm}Ex.\hspace{0.01cm}5.13]{2}}}}}}}
\put(1891,-712){\rotatebox{60.0}{\makebox(0,0)[b]{\smash{{\SetFigFont{8}{9.6}{\familydefault}{\mddefault}{\updefault}{\color[rgb]{0,0,0}\tiny{Ex.\hspace*{.02cm}\ref{Namioka2}}}%
}}}}}
\end{picture}%

\end{center}
%
%

\section{Preliminaries}
Let $X$ be a real vector space and let $K$ be a \emph{cone} in $X$, that is, $K$ is a wedge ($x,y\in K$ and $\lambda,\mu\geq 0$ imply $\lambda x + \mu y \in K$) and we have $K \cap (-K) =\left\{0\right\}$. In $X$ a partial order is introduced by defining $x\leq y$ if and only if $y-x\in K$. Denote by $X_+$ the set of positive elements in $X$. Then $X_+=K$. The pair $(X,\leq)$ is called a (partially) \emph{ordered vector space}. We write loosely $X$ instead of $(X,\leq)$.

Let $X$ be an ordered vector space. An element $u\in X$ is called an \emph{order unit}, if for every $x\in X$ there is some $\lambda\in\RR_{\geq 0}$ such that $\pm x\leq \lambda u$. The space $X$ is called \emph{Archimedean} if for every $x,y\in X$ with $nx\leq y$ for all $n\in\NN$ one has $x\leq 0$. A subset $A$ of $X$ is \emph{directed} if for every  $x,y\in A$ there are $z_1,z_2\in A$ such that $z_1\leq x,y\leq z_2$. A linear subspace $A$ of $X$ is directed if and only if for every $x,y\in A$ there is some $z\in A$ such that $x,y\leq z$. The space $X$ is directed if and only if the cone $X_+$ is generating in $X$, i.e.\ $X = X_+ - X_+$. The space $X$ has the \emph{Riesz decomposition property} (\emph{RDP}) if for every $x_1,x_2,z \in X_+$ with $z \leq x_1+x_2$ there exist $z_1,z_2 \in X_+$ such that $z = z_1+z_2$ with $z_1 \leq x_1$ and $z_2\leq x_2$. Equivalently, $X$ has the RDP if and only if for every $x_1,x_2,x_3,x_4\in X$ with $x_1,x_2 \leq x_3,x_4$ there exists some $z\in X$ such that $x_1, x_2 \leq z \leq x_3, x_4$. 
A set $M\sub X$ is called \emph{bounded above}, if there exists some $z\in X$ such that for every $x\in M$ one has $x\leq z$. Analogously we define \emph{bounded below}. A set is called \emph{order bounded}, if it is bounded above and below.
For $a,b\in X$ with $a\leq b$ we define \emph{order intervals} by $[a,b]:=\left\{x\in X\mid|a\leq x\leq b\right\}$, $\left]a,b\right]:=\left\{x\in X\mid| a< x\leq b\right\}$ and similarly $\left[a,b\right[$ and $\left]a,b\right[$.
A net $(x_\alpha)_\alpha$ in $X$ is said to be \emph{decreasing} (in symbols $x_\alpha\downarrow$), whenever $\alpha\leq\beta$ implies $x_\alpha \geq x_\beta$. For $x\in X$ the notation $x_\alpha \downarrow x$ means that $x_\alpha \downarrow$ and $\inf_\alpha x_\alpha = x$. The symbols $x_\alpha \uparrow$ and $x_\alpha \uparrow x$ are defined analogously. We say that a net \emph{order converges} (short \emph{o-converges}) to $x\in X$ (in symbols  $x_\alpha \stackrel{o}{\rightarrow} x$), if there is a net $(z_\alpha)_\alpha$ in $X$ such that $z_\alpha\downarrow 0$ and for every $\alpha$ one has $\pm (x-x_\alpha) \leq z_\alpha$. The equivalence of $x_\alpha \stackrel{o}{\rightarrow} x$ and $x-x_\alpha \stackrel{o}{\rightarrow} 0$ is obvious. If a net o-converges, then its limit is unique. A set $M\sub X$ is called \emph{order closed} (short \emph{o-closed}) if for every net $(x_\alpha)_\alpha$ in $M$ which o-converges to $x\in X$ one has $x\in M$.
For standard notations in case that $X$ is a vector lattice see \c{PosOp}. Recall that a vector lattice is \emph{Dedekind complete} whenever every non-empty subset which is bounded above has a supremum.

By a \emph{subspace} of an ordered vector space or a vector lattice we mean an arbitrary linear subspace  with the inherited order. We do not require it to be a lattice or a sublattice. 
We call a linear subspace $X_0$ of an ordered vector space $X$ \emph{order dense} in $X$ if for every $x\in X$ we have $x = \inf\left\{z\in X_0 \mid| x\leq z\right\}$, that is, each $x$ is the greatest lower bound of the set $\left\{z\in X_0 \mid| x\leq z\right\}$ in $X$, see \c[p.~360]{113}.
Recall that a linear map $i:X\rightarrow Y$ between two ordered vector spaces $X$ and $Y$ is called \emph{bipositive} if for every $x\in X$ one has $x \geq 0$ if and only if $i(x) \geq 0$. 
An embedding map is linear and bipositive, which implies injectivity. 
If there exists a vector lattice $Y$ and a bipositive linear map $i: X\rightarrow Y$ such that $i(X)$ is order dense in $Y$, then we call $X$ a \emph{pre-Riesz space} and $(Y,i)$ a \emph{vector lattice cover} of $X$.
Vector lattice covers are not unique, in general. For an intrinsic definition of a pre-Riesz space, see \c[Definition~1.1]{vanHaa}.
By \c[Theorem~17.1]{vanHaa} every Archimedean directed ordered vector space is pre-Riesz and every pre-Riesz space is directed. By \c[Proposition~1.4.7]{Kalauch} every vector lattice cover of an Archimedean pre-Riesz space is Archimedean. By \c[Theorem IV.11.1]{Vulikh_en} every Archimedean ordered vector space $X$ has a (unique up to isomorphism) \emph{Dedekind completion} $X^\delta$, i.e.\ a Dedekind complete vector lattice cover of $X$.
Let $Y$ be an Archimedean directed ordered vector space and $X$ an order dense subspace of $Y$. By \c[Theorem~4.14]{vanHaa} the Dedekind completion of $X$ and the Dedekind completion of $Y$ are order isomorphic vector lattices, i.e.\ we can identify $X^\delta = Y^\delta$. In particular, for an Archimedean pre-Riesz space $X$ with a vector lattice cover $Y$ we have $X^\delta = Y^\delta$.

Let $X$ be a pre-Riesz space and $(Y,i)$ a vector lattice cover of $X$. For $M\sub X$ let $M^u$ be the set of all upper bounds of $M$, i.e.\ $M^u=\left\{x\in X\mid| \forall y\in M\colon y \leq x\right\}$. If $s\in X$ is an upper bound of $M$, then we loosely write $M\leq s$. The elements $x,y\in X$ are called \emph{disjoint} (in symbols $x\perp y$) if $\left\{x+y,-x-y\right\}^u = \left\{x-y,-x+y\right\}^u$, for motivation and details see \c{1}. If $X$ is a vector lattice, then this notion of disjointness coincides with the usual one, see \c[Theorem~1.4(4)]{PosOp}. 
By \c[Proposition~2.1(ii)]{1} we have $x\perp y$ if and only if $i(x)\perp i(y)$. The \emph{disjoint complement} of a subset $A\sub X$ is $A^{\t{d}} := \left\{x\in X \mid| \forall a\in A\colon x\perp a\right\}$. A linear subspace $B$ of $X$ is called a \emph{band} if $(B^{\t{d}})^{\t{d}} = B$. The disjoint complement $A^{\t{d}}$ is a band, see \c[Proposition~5.5]{1}. By $\B_A$ we denote the \emph{band generated by} $A$, i.e.\ $\B_A:=A\dd$. If $X$ is an Archimedean vector lattice, then this notion of a band coincides with the classical notion of a band (i.e.\ an o-closed ideal).
The following notion of an ideal was introduced in \c[Definition~3.1]{vanGaa}. A subset $M$ of 
$X$ is called \emph{solid} if for every $x\in X$ and $y\in M$ the relation
$\left\{x,-x\right\}^u \supseteq \left\{y,-y\right\}^u$ implies $x\in M$.
A solid subspace of $X$ is called an \emph{ideal}. If $X$ is a vector lattice, this notion of an ideal coincides with the classical definition. A set $M$ is called \emph{solvex} if for every $x\in X$, $x_1,\ldots,x_n\in M$ the inclusion
\[\left\{x,-x\right\}^u \supseteq \left\{\sum_{i=1}^n \eps_i \lambda_i x_i \mid| \lambda_1,\ldots,\lambda_n\in]0,1],\hs \sum_{i=1}^n\lambda_i =1,\hs \eps_1,\ldots,\eps_n\in\left\{1,-1\right\}\right\}^u\]
implies $x\in M$. Every solvex set is solid and convex, see \c[Lemma~2.3]{9}. By \c[Theorem 4.11]{7} every directed ideal is solvex. By \c[Theorem~5.14]{2} every band in $X$ is an o-closed solvex ideal.
Let $Y$ be a vector lattice. For a set $A\sub Y$ we denote by $\I_A$ the ideal generated by $A$, i.e.\ the smallest ideal in $Y$ containing $A$.

Let $X$ be a pre-Riesz space and $(Y,i)$ a vector lattice cover of $X$. For a set $S\sub Y$ let the preimage of $S$ be denoted by $[S]i:=\left\{x\in X\mid| i(x)\in S\right\}$.
In \c{2} the following \emph{restriction property (R)} and \emph{extension property (E)} for a property $P$ are considered:
\begin{tabbing}
\quad\=(R) \= If $J\sub Y$ has property $P$ in $Y$, then $[J\cap i(X)]i$ has property $P$ in $X$.\\
\quad\=(E) \= If $I\sub X$ has property $P$ in $X$, then there exists a subset $J\sub Y$ with\\ 
      \>\> property $P$ such that $i(I) = J\cap i(X)$ in $Y$.
\end{tabbing}
Property $P$ might be the property of being an ideal or a band. By \c[Propositions~5.12, 5.3 and 5.1(iii)]{1} every band has (E) and every ideal and o-closed ideal has (R). By \c[Proposition 17 (a)]{6} for a band $B$ an extension band is given by $\B_{i(B)}$. By \c[Propositions~5.5(i) and 5.6]{1} every solvex ideal has both (E) and (R). For a solvex ideal $I$ an extension ideal is given by $\I_{i(I)}$. In general, bands do not have (R) and ideals and o-closed ideals do not have (E). For more details, see the overview table in \c[p.~603]{1}.
The pre-Riesz space $X$ is called \emph{pervasive in} $Y$, if for every $y\in Y_+$, $y\neq 0$, there exists some $x\in X$ such that $0<i(x) \leq y$. By \c[Proposition~3.3.20]{Kalauch} the space $X$ is pervasive in $Y$ if and only if $X$ is pervasive in any vector lattice cover. Then $X$ is simply called \emph{pervasive}.
The space $X$ is called \emph{fordable in} $Y$, if for every $y\in Y$ there exists a set $S\sub X$ such that $\left\{y\right\}^{\t{d}} = i(S)^{\t{d}}$ in $Y$. By \c[Proposition~3.3.20(ii)]{Kalauch} the space $X$ is fordable in $Y$ if and only if $X$ is fordable in any vector lattice cover. Then $X$ is simply called \emph{fordable}. By \c[Lemma~2.4]{3} every pervasive pre-Riesz space is fordable. In a fordable pre-Riesz space every band has (R), see \c[Proposition~2.5 and Theorem~2.6]{3}.

All ideals and bands considered here are assumed to be directed, if not stated otherwise. All ordered vector spaces and vector lattices are assumed to be Archimedean.

\smallskip
The following two lemmata were shown in the master's thesis~\c{Waaij}. For the sake of completeness we give here short proofs\footnote{Lemma~\ref{0.0} was originally formulated for the more special case of integrally closed pre-Riesz spaces. For the definition and details see \c{Waaij}.}.
\begin{lemma}{\textnormal{\cite[Theorem~4.15, Corollary~4.16]{Waaij}}}\label{0.0}
Let $X$ be an Archimedean pre-Riesz space and $(Y,i)$ a vector lattice cover of $X$. Then the following statements are equivalent.
\begin{enumerate}
\item\label{properties.1.it1} $X$ is pervasive.
\item\label{properties.1.it2} $\forall a\in X\hs\forall y\in Y\hs \big(i(a)<y \Rightarrow \exists x\in X\colon i(a)<i(x)\leq y\big)$.
\item\label{properties.1.it3} For every $y\in Y_+$ with $y\neq 0$ we have $y = \sup\left(i(X)\cap\left]0, y\right]\right)$.
\item\label{properties.1.it4} For every $y\in Y$ and $z\in X$ with $i(z)< y$ we have
$y = \sup\left(i(X)\cap\left]i(z),y\right]\right)$.
\end{enumerate}
\end{lemma}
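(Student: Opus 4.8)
The plan is to reduce the two ``shifted'' statements \ref{properties.1.it2} and \ref{properties.1.it4} to \ref{properties.1.it1} and \ref{properties.1.it3} by translation, so that the only substantial work lies in proving \ref{properties.1.it1}$\Leftrightarrow$\ref{properties.1.it3}. Throughout I use that $i$ is linear, so $i(X)$ is a linear subspace of $Y$, and that $Y$ is Archimedean.

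First I would settle \ref{properties.1.it1}$\Leftrightarrow$\ref{properties.1.it2}. For \ref{properties.1.it2}$\Rightarrow$\ref{properties.1.it1}, given $y\in Y_+$ with $y\neq 0$, apply \ref{properties.1.it2} with $a=0$ to the relation $0=i(0)<y$ to obtain $x\in X$ with $0<i(x)\leq y$. Conversely, for \ref{properties.1.it1}$\Rightarrow$\ref{properties.1.it2}, given $a\in X$ and $y\in Y$ with $i(a)<y$, the element $y-i(a)$ lies in $Y_+\setminus\{0\}$, so pervasiveness produces $x'\in X$ with $0<i(x')\leq y-i(a)$; then $x:=a+x'$ satisfies $i(a)<i(a+x')=i(a)+i(x')\leq y$. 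An entirely analogous translation handles \ref{properties.1.it3}$\Leftrightarrow$\ref{properties.1.it4}: for $y\in Y$ and $z\in X$ with $i(z)<y$ put $y_0:=y-i(z)\in Y_+\setminus\{0\}$; since $i(X)$ is a subspace one has $i(X)\cap\,]i(z),y]=i(z)+\big(i(X)\cap\,]0,y_0]\big)$, and taking suprema gives $\sup\big(i(X)\cap\,]i(z),y]\big)=i(z)+\sup\big(i(X)\cap\,]0,y_0]\big)$. Hence \ref{properties.1.it4} for $(y,z)$ is equivalent to \ref{properties.1.it3} for $y_0$, and specialising to $z=0$ shows the two families of statements coincide.

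The heart of the argument is \ref{properties.1.it1}$\Leftrightarrow$\ref{properties.1.it3}. The implication \ref{properties.1.it3}$\Rightarrow$\ref{properties.1.it1} is immediate: if $i(X)\cap\,]0,y]$ were empty its supremum could not equal the nonzero element $y$, so the set contains some $i(x)$, giving $0<i(x)\leq y$. For \ref{properties.1.it1}$\Rightarrow$\ref{properties.1.it3}, fix $y\in Y_+$, $y\neq 0$, and write $A:=i(X)\cap\,]0,y]$, which is nonempty by pervasiveness and has $y$ as an upper bound. Let $w$ be any upper bound of $A$; I must show $y\leq w$. Since $A\neq\emptyset$ we have $w\geq 0$, and $w\wedge y$ is again an upper bound of $A$ with $w\wedge y\leq w$; replacing $w$ by $w\wedge y$, I may assume $0\leq w\leq y$ and reduce to proving $w=y$. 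Suppose for contradiction that $w\neq y$, so that $y-w\in Y_+\setminus\{0\}$; pervasiveness then yields $x\in X$ with $0<i(x)\leq y-w$.

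The decisive step is now an Archimedean induction. I claim $n\,i(x)\leq w$ for all $n\in\NN$. For $n=1$, from $0<i(x)\leq y-w\leq y$ we get $i(x)\in A$, hence $i(x)\leq w$. If $k\,i(x)\leq w$, then $(k+1)\,i(x)=k\,i(x)+i(x)\leq w+(y-w)=y$, and since $(k+1)\,i(x)>0$ and $(k+1)x\in X$ this element again lies in $A$, so $(k+1)\,i(x)\leq w$; this completes the induction. Thus $n\,i(x)\leq w$ for every $n\in\NN$, and as $Y$ is Archimedean we conclude $i(x)\leq 0$, contradicting $i(x)>0$. Hence $w\geq y$, so $y=\sup A$, which is \ref{properties.1.it3}. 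I expect this Archimedean bootstrap — turning the single ``small'' element granted by pervasiveness into an unbounded multiple of itself still dominated by the fixed bound $w$ — to be the main obstacle; everything else is a direct application of the definitions or a translation.
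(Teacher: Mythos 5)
Your proposal is correct and takes essentially the same route as the paper: the translation arguments for (i)$\Leftrightarrow$(ii) and (iii)$\Leftrightarrow$(iv) are the paper's, and your Archimedean bootstrap for (i)$\Rightarrow$(iii) --- replacing an upper bound $w$ by $w\wedge y$ and inductively deducing $n\,i(x)\leq w$ from the alternation ``in the set, hence below $w$, hence adding $i(x)$ stays below $y$'' --- is exactly the induction the paper runs in its step (ii)$\Rightarrow$(iii), where (ii) is invoked only with $a=0$, i.e.\ as pervasiveness itself. The only difference is organizational: the paper proves the cycle (i)$\Rightarrow$(ii)$\Rightarrow$(iii)$\Rightarrow$(iv)$\Rightarrow$(i) rather than your three pairwise equivalences.
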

\begin{proof}
\ref{properties.1.it1} $\Rightarrow$ \ref{properties.1.it2}: Let $a\in X$ and $y\in Y$ such that $i(a)<y$. Then $y-i(a)\in Y_+$, $y-i(a)\neq 0$. Since $X$ is pervasive, there exists some $z\in X$ with $0 < i(z) \leq y-i(a)$. This yields $i(a)< i(a)+i(z) \leq y$. Thus for $x:=a+z\in X$ we have $i(a)<i(x)\leq y$.

\ref{properties.1.it2} $\Rightarrow$ \ref{properties.1.it3}: Let $y\in X_+$, $y\neq 0$. Then
$M:=i(X)\cap\left]0,y\right]\neq\varnothing$.
We show that $y=\sup M$ by contradiction. Suppose there is an $s'\in Y$ with $M\leq s'$ and $y\not\leq s'$. Then for $s:=s'\Inf y\in Y$ we have $M\leq s$. In particular, $s\in Y_+$ and $s < y$. Then $0<y-s$ and \ref{properties.1.it2} yield that there exists some $u\in X$ with $0< i(u) \leq y-s$. It follows $0<i(u)\leq y$ and thus $i(u)\in M$. As $M\leq s$, we have $0< i(u) \leq s$. We obtain
$0<2i(u)\leq s + (y-s) = y$.
Notice that $2i(u)\in M$ and therefore $0<2i(u)\leq s$. Again, we can combine this with $0< i(u) \leq y-s$ and obtain by induction for every $n\in\NN$ that $n i(u)\leq y$ implies $(n+1)i(u)\leq y$. Altogether, for every $n\in\NN$ we have $0 < n i(u) \leq y$. This is a contradiction to $Y$ being Archimedean. We conclude that $y = \sup\left(i(X)\cap\left]0,y\right]\right)$.

\ref{properties.1.it3} $\Rightarrow$ \ref{properties.1.it4}: Let $y\in Y$ and $z\in X$ such that $i(z)<y$.
Then $y-i(z)>0$ and \ref{properties.1.it3} yield
$y-i(z) = \sup\left(i(X)\cap\left]0,y-i(z)\right]\right)
= \sup \left\{x\in i(X) \mid| i(z)< x+i(z) \leq y\right\}$. By \c[Theorem~13.1]{Zaa1} in an ordered vector space we can interchange addition and the supremum and obtain $y = \sup  \left\{x+i(z)\in i(X) \mid| i(z)< x+i(z) \leq y\right\} = \sup\left(i(X)\cap\left]i(z),y\right]\right)$.

\ref{properties.1.it4} $\Rightarrow$ \ref{properties.1.it1}: This follows immediately, as $i(X)\cap\left]0,y\right]\neq\varnothing$.
\end{proof}

\begin{lemma}{\textnormal{\cite[Theorem~1.44]{Waaij}}}\label{0.1}
Let $X$, $Y$ and $Z$ be ordered vector spaces. Let $X$ be an order dense subspace of $Y$ and $Y$ an order dense subspace of $Z$. Then $X$ is order dense in $Z$.
\end{lemma}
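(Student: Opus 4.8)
The plan is to unwind the definition of order density directly. I must show that every $z\in Z$ satisfies $z=\inf_Z\{x\in X\colon z\leq x\}$, with the infimum taken in $Z$. That $z$ is a lower bound of $A:=\{x\in X\colon z\leq x\}$ is immediate, so the entire content is that $z$ is the \emph{greatest} lower bound. I would therefore fix an arbitrary lower bound $v\in Z$ of $A$ and aim to prove $v\leq z$.

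To descend from $Z$ to $X$ I would peel off the two density hypotheses one layer at a time. Since $Y$ is order dense in $Z$, we have $z=\inf_Z\{y\in Y\colon z\leq y\}$; hence it suffices to prove $v\leq y$ for each fixed $y\in Y$ with $z\leq y$. For such a $y$, order density of $X$ in $Y$ gives $y=\inf_Y C_y$ with $C_y:=\{x\in X\colon y\leq x\}$, and since $z\leq y\leq x$ for every $x\in C_y$ we get $C_y\subseteq A$, so $v$ is a lower bound of $C_y$ in $Z$. The argument then closes \emph{provided} the infimum $\inf_Y C_y=y$, which a priori is computed in $Y$, is also the infimum computed in the larger space $Z$.

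This last point is the crux, and I would isolate it as a separate claim: if $W$ is an order dense (linear) subspace of an ordered vector space $V$ and $T\subseteq W$ has $\inf_W T=t$, then also $\inf_V T=t$. The observation making this work is that, because $W$ is a subspace and hence closed under negation, order density ``from above'' automatically yields approximation ``from below'': applying the defining identity to $-u$ and negating shows $u=\sup_V\{w\in W\colon w\leq u\}$ for every $u\in V$. Granting this, the claim is short: $t$ is certainly a $V$-lower bound of $T$; and if $u\in V$ is any $V$-lower bound of $T$, then every $w\in W$ with $w\leq u$ satisfies $w\leq t'$ for all $t'\in T$, so $w\leq\inf_W T=t$, which makes $t$ an upper bound of $\{w\in W\colon w\leq u\}$ and therefore $u=\sup_V\{w\in W\colon w\leq u\}\leq t$.

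I expect the main obstacle to be exactly this interchange of ``infimum in $Y$'' and ``infimum in $Z$''. The naive temptation is to compare $v$ with the elements of $Y$ lying above $z$, but those elements are not controlled by $A$ and the comparison stalls. Switching to the sup-from-below characterization, which is available precisely because we work with subspaces, is what breaks the deadlock; note that the standing Archimedean assumption is not actually needed here. Once the claim is in hand, the two-step peeling above applies the claim to get $y=\inf_Z C_y$ and hence $v\leq y$ for every such $y$, so $v\leq\inf_Z\{y\in Y\colon z\leq y\}=z$. One could equivalently package the combination as the standard iterated-infimum identity $\inf_Z\big(\bigcup_{y}C_y\big)=z$ together with the set equality $\bigcup_{y\in Y,\,z\leq y}C_y=A$.
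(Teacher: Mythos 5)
Your argument is correct, and it shares the paper's overall skeleton: fix $z\in Z$, peel off order density of $Y$ in $Z$ and then of $X$ in $Y$, and conclude via an iterated infimum together with the set identity $\bigcup_{y\in Y,\,z\leq y}\left\{x\in X : y\leq x\right\}=\left\{x\in X : z\leq x\right\}$. The genuine difference lies at the point you call the crux. The paper's proof is the chain of equalities \eqref{0.1.eq1}, in which the inner infima $y=\inf\left\{x\in X : y\leq x\right\}$ --- which order density of $X$ in $Y$ yields only as infima computed \emph{in} $Y$ --- are used as infima \emph{in} $Z$ when the iterated-infimum identity is applied, without further comment. You isolate precisely this interchange as a free-standing lemma (an infimum, computed in an order dense linear subspace $W$ of $V$, of a subset $T\subseteq W$ is also its infimum in $V$) and prove it via the negation trick, converting density-from-above into the sup-from-below identity $u=\sup_V\left\{w\in W : w\leq u\right\}$. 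This is exactly where the linear-subspace hypothesis enters: for a mere order dense subset of a poset the interchange can fail (take $t<v<s_1,s_2$ with $s_1,s_2$ incomparable, $W=\left\{t,s_1,s_2\right\}$, $T=\left\{s_1,s_2\right\}$; then $\inf_W T=t$ while $\inf_V T=v$). Your lemma is the infimum analogue of the supremum-preservation fact the paper records as Corollary~\ref{properties.23y}~\ref{properties.23y.it1}, quoted from \cite{2}, so your route makes explicit and self-contained a step that the paper's own computation leaves implicit; the cost is a longer argument, the gain is rigour. Your remark that the blanket Archimedean assumption is not needed is consistent with the paper's proof as well.
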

\begin{proof}
Let $z\in Z$. Since $Y$ is order dense in $Z$, we have $z=\inf \left\{y\in Y\mid| z \leq y\right\}$. Due to $X$ being order dense in $Y$, for every $y\in Y$ we have $y=\inf\left\{x\in X\mid| y \leq x\right\}$. It follows
\begin{align}\label{0.1.eq1}
\begin{split}
z 	&= \inf\left\{y \in Y\mid| z\leq y\right\}
	= \inf\left\{\inf\left\{x\in X\mid| y \leq x\right\} \mid| y\in Y, z\leq y\right\} =\\
	&= \inf\left\{x\in X\mid| \exists y\in Y\colon y\leq x\t{ and } z\leq y\right\}.
\end{split}
\end{align}
Due to $X\sub Y$ for every $x\in X$ with $z\leq x$ there exists some $y\in Y$ with $z\leq y\leq x$ (e.g\ $y:=x\in Y$).
It follows $\left\{x\in X\mid| \exists y\in Y\colon z\leq y \t{ and }y\leq x\right\}=\left\{x\in X\mid| z\leq x\right\}$.
Thus due to \eqref{0.1.eq1} the infimum $\inf\left\{x\in X\mid| z\leq x\right\}$ exists in $Z$ and we have $z = \inf\left\{x\in X\mid| z\leq x\right\}$.
That is, $X$ is order dense in $Z$.
\end{proof}

The following technical results will be used later on.
\begin{proposition}\label{1}
Let $X$ be a fordable Archimedean pre-Riesz space and $(Y,i)$ a vector lattice cover of $X$. Let $A\sub X$. Then $[i(A)\dd]i = A\dd$.
\end{proposition}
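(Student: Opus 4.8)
The plan is to reduce the whole statement to the single disjoint complement together with the equivalence $x\perp y\iff i(x)\perp i(y)$ from \cite[Proposition~2.1(ii)]{1}. First I would record the elementary identity $[i(S)^{\t d}]i=S^{\t d}$, valid for every $S\sub X$: by definition $x\in[i(S)^{\t d}]i$ means $i(x)\perp i(s)$ for all $s\in S$, which by the equivalence is exactly $x\perp s$ for all $s\in S$, i.e.\ $x\in S^{\t d}$. Applying this with $S=A^{\t d}$ gives the key reformulation $[i(A^{\t d})^{\t d}]i=A\dd$. Thus the proposition becomes a comparison of the two bands $i(A)\dd=(i(A)^{\t d})^{\t d}$ and $(i(A^{\t d}))^{\t d}$ in $Y$ after pulling them back along $i$.

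For the inclusion $[i(A)\dd]i\sub A\dd$ no fordability is needed. Since $b\perp a$ forces $i(b)\perp i(a)$, I have $i(A^{\t d})\sub i(A)^{\t d}$; taking disjoint complements in $Y$, which reverse inclusions, yields $(i(A)^{\t d})^{\t d}\sub(i(A^{\t d}))^{\t d}$. Pulling back along $i$ and using the reformulation above gives $[i(A)\dd]i\sub[i(A^{\t d})^{\t d}]i=A\dd$.

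The hard part, and the only place where fordability enters, is the reverse inclusion $A\dd\sub[i(A)\dd]i$. Let $x\in A\dd$; by the reformulation $i(x)\in(i(A^{\t d}))^{\t d}$, and I must upgrade this to $i(x)\in(i(A)^{\t d})^{\t d}=i(A)\dd$. So I would fix an arbitrary $w\in i(A)^{\t d}$ and aim to show $i(x)\perp w$. Invoking fordability, choose $S\sub X$ with $\{w\}^{\t d}=i(S)^{\t d}$ in $Y$. The decisive observation is that $S\sub A^{\t d}$: since $w\perp i(a)$ for every $a\in A$, each $i(a)$ lies in $\{w\}^{\t d}=i(S)^{\t d}$, so $i(s)\perp i(a)$ for all $s\in S$ and $a\in A$, whence $s\in A^{\t d}$ and $i(S)\sub i(A^{\t d})$. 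Because $i(x)\in(i(A^{\t d}))^{\t d}$, it follows that $i(x)\perp i(s)$ for all $s\in S$, i.e.\ $i(x)\in i(S)^{\t d}=\{w\}^{\t d}$, that is $i(x)\perp w$. As $w$ was arbitrary, $i(x)\in i(A)\dd$ and hence $x\in[i(A)\dd]i$.

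I expect the main obstacle to be this last step: the subtlety is that $i(A^{\t d})$ may be strictly smaller than $i(A)^{\t d}$, so the two bands in $Y$ need not coincide outright, and fordability is exactly the tool that replaces the troublesome disjoint element $w$ by the image $i(S)$ of a subset of $X$, bringing the argument back into $i(X)$ where the perp-equivalence applies. I would also take care throughout to keep track of whether a disjoint complement is formed in $X$ or in $Y$, since the same superscript notation serves both.
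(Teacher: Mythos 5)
Your proof is correct, but it follows a genuinely different route from the paper's. The paper's argument is structural: it invokes the extension property (E) for bands to get $\left[i(A\dd)\dd\right]i = A\dd$, then uses fordability only through the cited restriction property (R) for bands (\cite[Proposition~2.5 and Theorem~2.6]{3}) to conclude that $\left[i(A)\dd\right]i$ is a band in $X$; since this band contains $A$, it contains $A\dd = \left(\left[i(A)\dd\right]i\right)\dd$, which together with the easy inclusion finishes the proof. You instead work from first principles: your single-complement identity $[i(S)^{\t{d}}]i = S^{\t{d}}$ needs only the equivalence $x\perp y \Leftrightarrow i(x)\perp i(y)$ (and is sharper than the paper's equation for $\left[i(A\dd)\dd\right]i$, which requires the band extension theorem), and in the hard inclusion you apply the definition of fordability directly, replacing an arbitrary $w\in i(A)^{\t{d}}$ by $i(S)$ with $S\sub A^{\t{d}}$ so that the perp-equivalence becomes applicable. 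In effect you re-derive, inline, exactly the instance of the restriction property that the paper imports as a black box. What the paper's route buys is brevity and reuse of known machinery; what yours buys is self-containedness and transparency about where fordability actually enters --- your remark that $i(A^{\t{d}})$ may be strictly smaller than $i(A)^{\t{d}}$, and that fordability is precisely the bridge across this gap, is the right way to see why the hypothesis cannot be dropped.
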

\begin{proof}
Let $A\sub X$.
Then the set $A\dd$ is a band in $X$. An extension band of $A\dd$ in $Y$ is of the shape $i(A\dd)\dd$.
If we restrict $i(A\dd)\dd$ to $X$, then we obtain $A\dd$, i.e.\ 
\begin{equation}\label{closedness.1000.eq1}
\left[i(A\dd)\dd\right]i = A\dd.
\end{equation}
From $i(A) \sub i(A\dd)$ it follows $i(A)\dd \sub i(A\dd)\dd$.
Restricting this inclusion to $X$ and using \eqref{closedness.1000.eq1} leads to
\begin{equation}\left[i(A)\dd\right]i \sub \left[i(A\dd)\dd\right]i = A\dd.\label{closedness.1000.eq2}\end{equation}
Since $X$ is fordable, we have (R) for bands. If we restrict the band $i(A)\dd$ in $Y$ to $X$, then $\left[i(A)\dd\right]i$ is a band in $X$. Therefore the inclusion $A\sub\left[i(A)\dd\right]i$ yields $A\dd\sub \left(\left[i(A)\dd\right]i\right)\dd = \left[i(A)\dd\right]i$. Together with \eqref{closedness.1000.eq2} this implies $\left[i(A)\dd\right]i = A\dd$.
\end{proof}

\begin{lemma}\label{gelbes_buch}
Let $Y$ be an ordered vector space, $X\sub Y$ a linear subspace and $S\sub X$ be a non-empty subset.
If $\sup_Y S$ exists in $Y$ such that $\sup_Y S\in X$, then $\sup_X S$ exists in $X$ and $\sup_Y S = \sup_X S$.
\end{lemma}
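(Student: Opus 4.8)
The plan is to set $s := \sup_Y S$, which by hypothesis lies in $X$, and to verify directly that $s$ satisfies the two defining properties of a supremum \emph{within} $X$: that it is an upper bound of $S$ in $X$, and that it is the least such. The crucial structural fact I would exploit throughout is that $X$ carries the order inherited from $Y$, so that for any two elements $x,y\in X$ the relation $x\leq y$ has exactly the same meaning in $X$ as in $Y$. This is what allows a supremum computed in the larger space to descend to the subspace.

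First I would check that $s$ is an upper bound of $S$ in $X$. Since $s=\sup_Y S$, we have $x\leq s$ in $Y$ for every $x\in S$; because $S\sub X$ and $s\in X$, and the order on $X$ is the restriction of that on $Y$, the same inequalities $x\leq s$ hold in $X$.

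Next I would establish minimality. Let $t\in X$ be an arbitrary upper bound of $S$ in $X$. Then $t$ is also an upper bound of $S$ in $Y$, since both $t$ and the elements of $S$ lie in $X$ and the order coincides. As $s$ is the \emph{least} upper bound of $S$ in $Y$, it follows that $s\leq t$ in $Y$, and hence $s\leq t$ in $X$. Thus $s$ is the least upper bound of $S$ in $X$, which shows that $\sup_X S$ exists and equals $s=\sup_Y S$.

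There is essentially no hard step here; the only thing to be careful about is to avoid silently assuming that every upper bound of $S$ in $Y$ already lies in $X$. That assumption is neither needed nor true in general: the argument uses only the \emph{specific} hypothesis that $\sup_Y S$ happens to lie in $X$, together with the fact that the comparison between any two elements of $X$ is the same whether carried out in $X$ or in $Y$.
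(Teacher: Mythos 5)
Your proof is correct and follows essentially the same route as the paper's own argument: both verify directly that $\sup_Y S$, once it lies in $X$, is an upper bound of $S$ in $X$ and is below every upper bound of $S$ taken from $X$, using that the order on $X$ is inherited from $Y$. Your additional remark about not needing all upper bounds of $S$ in $Y$ to lie in $X$ is a fair clarification but does not change the substance.
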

\begin{proof} 
Let $S\sub X$. Let $z:=\sup_Y S$ exist and belong to $X$.
Then $z$ is an upper bound of $S$ in $X$.
Let $a\in X$ be another upper bound of $S$ in $X$. Since in $Y$ we have $z=\sup_Y S$, it follows $z\leq a$ in $Y$. This implies $z\leq a$ in $X$. Thus $z$ is the least upper bound of $S$ in $X$, i.e.\ $z = \sup_X S$.
\end{proof}
Lemma~\ref{gelbes_buch} can be similarly formulated for infima instead of suprema.
Part \ref{properties.23y.it1} of the following corollary was established in \c[Proposition~5.1~(i)]{2}.
\begin{corollary}\label{properties.23y}
Let $X$ be a pre-Riesz space, $(Y,i)$ a vector lattice cover of $X$ and $S\sub X$ a non-epmpty subset.
\begin{enumerate}
\item\label{properties.23y.it1} If $\sup S$ exists in $X$, then $\sup i(S)$ exists in $Y$ and $\sup i(S) = i(\sup S)$.
\item\label{properties.23y.it2} If $\sup i(S)$ exists in $Y$ and $\sup i(S)\in i(X)$, then $\sup S$ exists in $X$ and $\sup i(S) = i(\sup S)$.
\end{enumerate}
\end{corollary}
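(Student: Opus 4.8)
The plan is to prove part~\ref{properties.23y.it2} by transporting the statement into the ambient lattice $Y$ and applying Lemma~\ref{gelbes_buch}, while part~\ref{properties.23y.it1} (which is cited from \c[Proposition~5.1~(i)]{2}) can be recovered quickly from order density. The common thread is that $i\colon X\to i(X)$ is a bipositive linear bijection, hence an order isomorphism onto its image: for $x,y\in X$ we have $x\leq y$ if and only if $i(x)\leq i(y)$. Consequently suprema correspond under $i$, in the sense that for $S\sub X$ the supremum $\sup_X S$ exists if and only if $\sup_{i(X)} i(S)$ exists in the subspace $i(X)$, and in that case $i(\sup_X S) = \sup_{i(X)} i(S)$. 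I would record this identification first, since both halves reduce to it together with a comparison between suprema in $i(X)$ and in $Y$.

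For part~\ref{properties.23y.it1}, suppose $s:=\sup_X S$ exists. Then $i(s)$ is an upper bound of $i(S)$ in $Y$ because $i$ is positive. To see it is the least one, I would take any upper bound $y\in Y$ of $i(S)$: for every $z\in i(X)$ with $y\leq z$ one has $i(S)\leq z$, whence $i(s)=\sup_{i(X)} i(S)\leq z$. Since $i(X)$ is order dense in $Y$, we have $y=\inf\left\{z\in i(X)\mid| y\leq z\right\}$, and as $i(s)$ is a lower bound of this set we conclude $i(s)\leq y$. Thus $\sup_Y i(S)=i(s)=i(\sup_X S)$.

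For part~\ref{properties.23y.it2}, suppose $\sup_Y i(S)$ exists in $Y$ and lies in $i(X)$. I would apply Lemma~\ref{gelbes_buch} with the ordered vector space $Y$, the linear subspace $i(X)\sub Y$, and the non-empty subset $i(S)\sub i(X)$: the hypothesis that $\sup_Y i(S)$ exists and belongs to $i(X)$ yields that $\sup_{i(X)} i(S)$ exists in $i(X)$ and equals $\sup_Y i(S)$. Transporting this back along the order isomorphism $i$ gives that $\sup_X S$ exists in $X$ with $i(\sup_X S)=\sup_{i(X)} i(S)=\sup_Y i(S)$, which is exactly the claim.

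The genuinely informative point is the asymmetry between the two directions rather than any hard computation: moving a supremum from $X$ up to the larger space $Y$ (part~\ref{properties.23y.it1}) needs order density, because $i(\sup_X S)$ need not be the $Y$-supremum of $i(S)$ without it, whereas the converse (part~\ref{properties.23y.it2}) is a pure subspace statement requiring only Lemma~\ref{gelbes_buch}. The main thing to get right is therefore the bookkeeping that identifies $i(X)$ as an order-isomorphic copy of $X$ sitting inside $Y$; once that is in place, both halves follow from the order-density characterization and the restriction lemma respectively.
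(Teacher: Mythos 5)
Your proposal is correct and follows the paper's intended derivation: the paper gives no separate proof of this corollary, citing \cite[Proposition~5.1~(i)]{2} for part~(i) and positioning part~(ii) as an immediate consequence of Lemma~\ref{gelbes_buch}, which is precisely how you obtain it (applying the lemma to $Y$, $i(X)$, $i(S)$ and transporting back along the bipositive map $i$). Your direct order-density argument for part~(i) — showing $i(\sup_X S)$ is a lower bound of $\left\{z\in i(X) \mid y\leq z\right\}$ for any upper bound $y$ of $i(S)$, hence $i(\sup_X S)\leq y$ — correctly supplies the proof the paper outsources to \cite{2}.
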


\section{Supremum closed ideals and their relationship to bands}
In this section we introduce the concept of an s-closed ideal. In Proposition~\ref{1.0} we show that in pervasive pre-Riesz spaces the notion of an s-closed directed ideal is equivalent to the definition of a band given in \c[Chapter 1, \S 4]{Zaa1} by Luxemburg and Zaanen. 

Let $X$ be a pervasive pre-Riesz space and $I$ a directed ideal in $X$.
Theorem~\ref{1.3.5} characterizes positive elements of $I\dd$.
In Theorem~\ref{1.3} we show that if $I$ is s-closed, then $I$ is a band, provided that $I\dd$ is directed. Conversely, Corollary~\ref{1.4} yields that if $I$ is a band, then $I$ is s-closed. In Example~\ref{Namioka1} we show that in Theorem~\ref{1.3} we can not drop the condition of $I\dd$ being directed.

\begin{definition}
Let $X$ be an ordered vector space. An ideal $I$ in $X$ is \textbf{supremum closed} (short \textbf{s-closed}), if for every $z\in X_+$ the relation $z = \sup\left(I\cap [0,z]\right)$ implies $z\in I$.
\end{definition}
The condition $z = \sup\left(I\cap [0,z]\right)$ means that the supremum exists and equals $z$.

In the two subsequent results we characterize s-closed ideals.
\begin{lemma}\label{1.5}
Let $X$ be an ordered vector space and $I$ and ideal in $X$. Then $I$ is s-closed if and only if
$\forall z\in X \hs\big( \left(\exists a\in I, a\leq z:\hs z=\sup\left(I\cap [a,z]\right)\right) \Rightarrow z\in I\big)$.
\end{lemma}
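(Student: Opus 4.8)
The plan is to prove the two implications separately, with the forward direction immediate and the converse resting on a single translation argument.

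First I would observe that the right-hand condition is a genuine weakening of the hypotheses in the definition of s-closedness: it permits an arbitrary $z\in X$ (not only $z\in X_+$) and an arbitrary lower endpoint $a\in I$ in place of the fixed endpoint $0$. Consequently the implication from the characterization to s-closedness is trivial. Indeed, if $z\in X_+$ satisfies $z=\sup\left(I\cap[0,z]\right)$, then choosing $a:=0$ works: it lies in $I$ because $I$ is a subspace, it satisfies $a\leq z$ because $z\geq 0$, and the equality $z=\sup\left(I\cap[a,z]\right)$ is exactly the given one, so the characterization forces $z\in I$.

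For the converse I would assume $I$ is s-closed and take $z\in X$ together with some $a\in I$, $a\leq z$, such that $z=\sup\left(I\cap[a,z]\right)$. The key step is to translate the whole configuration by $-a$, turning it into the situation literally covered by the definition. Since $a\in I$ and $I$ is a linear subspace, an element $x$ belongs to $I$ if and only if $x-a$ does, while $a\leq x\leq z$ holds if and only if $0\leq x-a\leq z-a$; hence $\left(I\cap[a,z]\right)-a=I\cap[0,z-a]$. Because translation by $-a$ is an order isomorphism of $X$ (equivalently, by the interchange of addition and suprema in an ordered vector space), the existing supremum transports as $\sup\left(I\cap[0,z-a]\right)=\sup\left(I\cap[a,z]\right)-a=z-a$, and $z-a\in X_+$ since $a\leq z$.

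Finally I would apply s-closedness to the positive element $z-a$: from $z-a=\sup\left(I\cap[0,z-a]\right)$ it follows that $z-a\in I$, and since $a\in I$ with $I$ a subspace, $z=(z-a)+a\in I$, which is the desired conclusion. The only point that needs care is the transport of the supremum under the shift, but this is routine given that translations are order isomorphisms, so I do not anticipate any real obstacle; the content of the lemma is essentially that the normalization $a=0$ in the definition loses no generality.
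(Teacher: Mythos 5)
Your proposal is correct and is essentially identical to the paper's own proof: the paper likewise handles the converse direction by the special case $a:=0$, and proves the forward direction by translating by $-a$, invoking the interchange of addition and suprema in an ordered vector space (citing Luxemburg--Zaanen, Theorem~13.1) to get $z-a=\sup\left(I\cap[0,z-a]\right)$, and then concluding $z=(z-a)+a\in I$ from s-closedness and the fact that $I$ is a subspace. No gaps; your observation that $\left(I\cap[a,z]\right)-a=I\cap[0,z-a]$ because $a\in I$ is exactly the implicit step in the paper's displayed equality.
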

\begin{proof}
``$\Rightarrow$'':
Let $z\in X$. Assume that there exists some $a\in I$ with $a\leq z$ such that $z=\sup\left(I\cap [a,z]\right)$. By \c[Theorem~13.1]{Zaa1} in an ordered vector space we can interchange addition and the supremum. It follows
\[z-a = \sup\left\{x-a\in I \mid| 0\leq x-a\leq z-a\right\} = \sup\left(I\cap [0,z-a]\right).\]
Since $I$ is s-closed, it follows $z-a\in I$. As $a\in I$, we obtain $z = (z-a) +a \in I$.

``$\Leftarrow$'':
The special case $a:=0$ yields that $I$ is s-closed.
\end{proof}

The following alternative definition of a band was given in \c[Chapter 1, \S 4]{Zaa1} by Luxemburg and Zaanen. 
The ideal $I$ in a vector lattice $Y$ is called a \textit{band} if 
for every subset $A\sub I$ we have
\begin{equation}\label{eq_zclosed}
\sup A \t{ exists in } Y\hs\Rightarrow\hs \sup A \in I.
\end{equation}
In a vector lattice this definition is equivalent to the standard definition of a band, i.e.\ to $I$ being an o-closed ideal.
The following result relates \eqref{eq_zclosed} to the s-closedness of a directed ideal in pre-Riesz spaces.
\begin{proposition}\label{1.0}
Let $X$ be an Archimedean pervasive pre-Riesz space and $I\sub X$ a directed ideal. 
Then the following statements are equivalent:
\begin{enumerate}
\item\label{1.0-item1} $I$ is s-closed
\item\label{1.0-item2} $\forall A\sub I:\hs \sup A \t{ exists in } X \Rightarrow \sup A \in I$.
\end{enumerate}
\end{proposition}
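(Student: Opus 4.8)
The implication \ref{1.0-item2} $\Rightarrow$ \ref{1.0-item1} is immediate, and I would dispatch it first: if $z\in X_+$ satisfies $z=\sup\left(I\cap[0,z]\right)$, then $A:=I\cap[0,z]$ is a subset of $I$ (nonempty, as $0\in A$) whose supremum $z$ exists in $X$, so \ref{1.0-item2} forces $z\in I$.

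For the substantial direction \ref{1.0-item1} $\Rightarrow$ \ref{1.0-item2} the plan is to reduce to Lemma~\ref{1.5}. Let $A\sub I$ be nonempty with $z:=\sup A$ existing in $X$ (the empty case is degenerate, since an ordered vector space possesses a least element only when it is $\{0\}$). Fix $a_0\in A$; then $a_0\in I$ and $a_0\leq z$, so by Lemma~\ref{1.5} it suffices to show $z=\sup\left(I\cap[a_0,z]\right)$. That $z$ is an upper bound of $I\cap[a_0,z]$ is clear, so the crux is to prove that every upper bound $s\in X$ of $I\cap[a_0,z]$ satisfies $z\leq s$; because $z=\sup A$, this in turn reduces to establishing $b\leq s$ for an arbitrary $b\in A$.

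The key work takes place in the vector lattice cover $(Y,i)$, which supplies the meets that $X$ lacks. Since $I$ is directed, I would choose $c\in I$ with $a_0,b\leq c$ and form $w:=i(c)\wedge i(z)\in Y$. From $a_0,b\leq c$ and $a_0,b\leq z$ one gets $i(a_0),i(b)\leq w\leq i(z)$. If $i(a_0)<w$, pervasiveness in the form of Lemma~\ref{0.0}\ref{properties.1.it4} gives $w=\sup\left(i(X)\cap\left]i(a_0),w\right]\right)$; any $x'\in X$ with $i(a_0)<i(x')\leq w$ then satisfies $a_0\leq x'\leq c$ and $x'\leq z$, and since ideals are order convex (solidity applied to $y:=c-a_0\in I$ shows $0\leq x'-a_0\leq y$ forces $x'-a_0\in I$) we obtain $x'\in I\cap[a_0,z]$, whence $i(x')\leq i(s)$. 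Thus $i(s)$ bounds $i(X)\cap\left]i(a_0),w\right]$ from above, so $w\leq i(s)$; the remaining case $i(a_0)=w$ gives $w=i(a_0)\leq i(s)$ directly, as $a_0\in I\cap[a_0,z]$. In either case $i(b)\leq w\leq i(s)$, so $b\leq s$ by bipositivity.

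This yields $z\leq s$, hence $z=\sup\left(I\cap[a_0,z]\right)$, and Lemma~\ref{1.5} delivers $z\in I$. I expect the pervasiveness approximation to be the decisive and most delicate step: without RDP there need be no single element of $X$ lying between $b$ and $z$ and inside $I\cap[a_0,z]$, so one cannot bound $b$ by $s$ directly; instead one realizes the meet $w=i(c)\wedge i(z)$ in $Y$ and fills it from below by images of elements of $I\cap[a_0,z]$. It is exactly here that both hypotheses — that $X$ is pervasive and that $I$ is a directed (hence order convex) ideal — become indispensable.
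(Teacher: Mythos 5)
Your proof is correct, and it takes a genuinely different route from the paper's. The paper treats \ref{1.0-item2} $\Rightarrow$ \ref{1.0-item1} in the same trivial way and also funnels \ref{1.0-item1} $\Rightarrow$ \ref{1.0-item2} through Lemma~\ref{1.5}, but its core computation lives in the Dedekind completion $X^\delta$: it decomposes $i(s)$ into $i(s)^+$ and $-i(s)^-$, approximates each from below by elements of $i(I)$ --- using pervasiveness (Lemma~\ref{0.0}) together with the solvexity of directed ideals and the extension ideal $\I_{i(I)}$ --- and then, via two auxiliary sets $S_1=i\left(I\cap[z,s]\right)$ and $S_2$, proves $i(s)=\sup i\left(I\cap[z,s]\right)$, which is pulled back to $X$ by Corollary~\ref{properties.23y}~\ref{properties.23y.it2} before Lemma~\ref{1.5} is applied. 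You bypass all of this: reducing ``every upper bound $s$ of $I\cap[a_0,z]$ dominates $z$'' to ``dominates each fixed $b\in A$'' lets you work with the single meet $w=i(c)\wedge i(z)$ in an arbitrary vector lattice cover, fill $\left]i(a_0),w\right]$ with elements of $i(X)$ by Lemma~\ref{0.0}~\ref{properties.1.it4}, and place those elements inside $I\cap[a_0,z]$ by order convexity of ideals (your derivation of order convexity from solidity is sound, and the paper uses the same fact in the proof of Lemma~\ref{closedness.13a}); your case split $i(a_0)<w$ versus $i(a_0)=w$ then closes the argument. What your route buys: no Dedekind completion, no positive/negative-part decomposition, and no appeal to solvexity or extension ideals --- only binary meets in a cover plus elementary solidity, so the proof is more self-contained and works verbatim in any vector lattice cover. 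What the paper's route buys: the explicit intermediate identity $i(s)=\sup i\left(I\cap[z,s]\right)$ in $X^\delta$, and reuse of the extension/restriction machinery that the paper needs anyway for Lemma~\ref{1.2}, so its proofs share infrastructure; your argument does not produce that formula, but nothing in the proposition requires it.
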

\begin{proof}
\ref{1.0-item1} $\Rightarrow$ \ref{1.0-item2}:
Since $X$ is Archimedean, $X$ has the Dedekind completion $(X^\delta,i)$.
For a non-empty subset $A$ of the s-closed and directed ideal $I$ let $s:=\sup A$ exist in $X$.
We show that there exists some $z\in I$ such that $i(s) = \sup i\left(I\cap\left[z,s\right]\right)$.
Then by Corollary~\ref{properties.23y}~\ref{properties.23y.it2} this leads to
$s =\sup \left(I\cap\left[z,s\right]\right)$.
Since $I$ is s-closed, due to Lemma~\ref{1.5} we then conclude that $s\in I$.

To show that there exists some $z\in I$ such that $i(s) = \sup j\left(I\cap\left[z,s\right]\right)$, we proceed as follows. First we approximate $-i(s)^-$ and $i(s)^+$ from below by elements in $i(I)$. That is, we establish
\begin{align}
\exists z\in I,\hs i(z)\leq -i(s)^-\colon -i(s)^- \hs&=\hs \sup \left(i(I)\cap \left[i(z),-i(s)^-\right]\right),\label{gl3_1}\\ 
i(s)^+ \hs&=\hs \sup \left(i(I)\cap \left[0,i(s)^+\right]\right). \label{gl3_2}
\end{align}
Finally, for a certain set $S_2\sub i(I)$ we then establish $\sup j\left(I\cap\left[z,s\right]\right)=\sup S_2$ and $\sup S_2=i(s)$.

To show \eqref{gl3_1}, let $a\in A$. Since $I$ is directed and $a\in I$, there exists an element $z\in I$ such that $z\leq 0, a$. Due to $a\leq s$ it follows $i(z) \leq 0\Inf i(a) \leq 0\Inf i(s) = -i(s)^-$.
Since $X$ is pervasive, by Lemma~\ref{0.0} we obtain
$-i(s)^-=\sup \left(i(X)\cap \left[i(z),-i(s)^-\right]\right)$.
As $i(z)\in i(I)$, for every $x\in i(X)$ with $i(z)\leq x\leq -i(s)^-\leq 0$ we have $x\in i(I)$. This yields \eqref{gl3_1}.

To show \eqref{gl3_2}, let $A^{(+)} :=\left\{i(a)^+\in X^\delta\mid|a\in A\right\}$. We first show for every $u\in A^{(+)}$ that $u=\sup \left(i(I) \cap\left[0, u\right]\right)$. Since $X$ is pervasive, by Lemma~\ref{0.0} for every $u \in A^{(+)}$ we have $u=\sup \left(i(X) \cap\left[0, u\right]\right)$. The ideal $I$ is directed and therefore solvex. Thus $\I_{i(I)}$ is an extension ideal of $I$, i.e.\ we have $\left[\I_{i(I)}\right]i=I$. Due to $u\in A^{(+)}\sub \I_{i(I)}$ it follows for every $x\in X$ with $i(x)\in[0,u]$ that $x\in I$. This implies $u=\sup \left(i(I) \cap\left[0, u\right]\right)$. We obtain the following chain of inequalities, which establishes \eqref{gl3_2}:
\begin{align*}
\begin{split}
i(s)^+ \hs&=\sup A^{(+)} =
\sup\left\{ \sup\left(i(I)\cap [0,u]\right) \mid| u\in A^{(+)} \right\}\hs\leq\\
&\leq\hs \sup \left\{\sup\left(i(I)\cap [0,i(s)^+]\right) \mid| u\in A^{(+)}\right\} \hs=\hs\\
&=\hs\sup\left(i(I)\cap [0,i(s)^+]\right)\hs\leq\hs i(s)^+.
\end{split}
\end{align*}

Let $S_1:=i\left(I\cap[z,s]\right)$ and $S_2 :=\left\{x_1+x_2 \in i(I)\hspace*{.5mm}\middle|\hspace*{.5mm} x_1\in[0, i(s)^+], x_2\in[i(z),-i(s)^-]\right\}$. In the next step we show
\begin{align}\label{gl3_4}
\sup S_1 =\sup S_2.
\end{align}
Clearly, every upper bound of $S_1$ is an upper bound of $S_2$. It is left to show that every upper bound of $S_2$ is an upper bound of $S_1$. Let $u\in X^\delta$ such that $S_2\leq u$.
Then for every $x_1\in i(I)\cap[0, i(s)^+]$ and every $x_2\in i(I)\cap[i(z),-i(s)^-]$ we have $x_1+x_2\leq u$. In the inequality $x_1 \hs\leq\hs u-x_2$ fix the element $x_2$. Then \eqref{gl3_2} leads to $i(s)^+ \hs\leq\hs u - x_2$. We take the supremum over all $x_2$ in the inequality $x_2 \hs\leq\hs u - i(s)^+$ and obtain from \eqref{gl3_1} that $i(s)^+-i(s)^- \hs\leq\hs u$. Since for every $x\in S_1$ we have $x\leq i(s)^+-i(s)^-=i(s)$, it follows $x\leq u$. Thus $S_1\leq u$ and we obtain \eqref{gl3_4}.

Next, we show
\begin{equation}\label{gl3_5}
\sup S_2 =i(s).
\end{equation}
Clearly, $\sup S_2 \leq i(s)$. To see the converse inequality, let $u\in X^\delta$ be an upper bound of $S_2$. We show $u\geq i(s)$. For every $x_1,x_2\in i(I)$ with $x_1\in[0, i(s)^+]$ and $x_2\in[i(z),-i(s)^-]$ we have $u\geq x_1+x_2$. Then \eqref{gl3_2} yields $u\geq i(s)^+ + x_2$ and \eqref{gl3_1} yields $u\geq i(s)^+ + i(s)^-$. This implies
$u \geq\hs i(s)$ and establishes \eqref{gl3_5}.

From \eqref{gl3_4} and \eqref{gl3_5} together it follows
\[i(s)=\sup S_2 = \sup S_1=\sup \left(i\left(I\cap[z,s]\right)\right).\]
Corollary~\ref{properties.23y}~\ref{properties.23y.it2} yields
$s =\sup \left(I\cap\left[z,s\right]\right)$. Since $I$ is s-closed, we get $\sup A=s\in I$.

\ref{1.0-item2} $\Rightarrow$ \ref{1.0-item1}:
Let $z\in X_+$ be such that $z=\sup\left(I\cap[0,z]\right)$. By our hypothesis it follows for $A:=I\cap[0,z]$ that $\sup A=z\in I$. Thus $I$ is s-closed.
\end{proof}

The following technical result will be used in the proofs of Theorems~\ref{1.3} and \ref{1.3.5}.
\begin{lemma}\label{1.2}
Let $X$ be an Archimedean pervasive pre-Riesz space and $I$ a directed ideal in $X$. Then for every $z\in (I\dd)_+$ we have
$z=\sup\left(I\cap[0,z]\right)$.
\end{lemma}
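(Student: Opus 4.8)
The plan is to pass to the Dedekind completion $(X^\delta,i)$ of $X$, which is a Dedekind complete vector lattice cover, and to reduce the claim to the statement that $i(z)=\sup\big(i(I)\cap[0,i(z)]\big)$ in $X^\delta$. Since $i$ is bipositive one has $i\big(I\cap[0,z]\big)=i(I)\cap[0,i(z)]$, and this set is non-empty (it contains $0$) and bounded above by $i(z)$, so its supremum exists in the Dedekind complete space $X^\delta$; call it $u$ and note $0\le u\le i(z)$. Once we show $u=i(z)$, Corollary~\ref{properties.23y}~\ref{properties.23y.it2} returns the supremum to $X$ and yields $z=\sup(I\cap[0,z])$, as desired.

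The first main step is to show that $i(z)\wedge i(x)\le u$ for every $x\in I_+$. If $c:=i(z)\wedge i(x)=0$ this is clear, so assume $c\neq 0$. Here pervasiveness enters: by Lemma~\ref{0.0}~\ref{properties.1.it3} we have $c=\sup\big(i(X)\cap\,]0,c]\big)$. For $t\in X$ with $0<i(t)\le c$ one gets $0\le t\le x$ by bipositivity, and since $x\in I$ and $I$ is solid this forces $t\in I$; moreover $i(t)\le c\le i(z)$, so $i(t)\in i(I)\cap[0,i(z)]$ and hence $i(t)\le u$. Taking the supremum over all such $t$ gives $c\le u$, as claimed.

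The second main step uses the hypothesis $z\in I\dd$. Since $X$ is pervasive it is fordable, so Proposition~\ref{1} applied to $X^\delta$ gives $i(z)\in i(I)\dd$. Passing to the ideal $J:=\I_{i(I)}$ generated by $i(I)$ in $X^\delta$, one has $J\dd=i(I)\dd$ (the disjoint complement of a set equals that of the ideal it generates), so $i(z)\in (J\dd)_+$. A standard band projection formula, valid in the Dedekind complete vector lattice $X^\delta$, then yields $i(z)=\sup\{\,i(z)\wedge v\mid v\in J_+\,\}$. Finally, for $v\in J_+$ the directedness of the linear subspace $I$ lets us dominate the finitely many generators occurring in $v$ by a single positive element of $I$, producing $x''\in I_+$ with $v\le i(x'')$; combined with the first step this gives $i(z)\wedge v\le i(z)\wedge i(x'')\le u$. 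Taking the supremum over $v\in J_+$ shows $i(z)\le u\le i(z)$, i.e.\ $u=i(z)$, which completes the reduction.

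I expect the main obstacle to be this second step: correctly transporting the hypothesis $z\in I\dd$ into the vector lattice as $i(z)\in i(I)\dd$ (via fordability and Proposition~\ref{1}) and then exploiting the band structure in $X^\delta$ through the band projection formula, together with the directedness argument that replaces the ideal generated by $i(I)$ by single positive elements of $i(I)$, so that the first step can be applied.
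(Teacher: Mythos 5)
Your proof is correct, and it shares the paper's outer scaffolding — Dedekind completion, Proposition~\ref{1} (via fordability) to transport $z\in I\dd$ to $i(z)\in i(I)\dd$, pervasiveness through Lemma~\ref{0.0}, and Corollary~\ref{properties.23y}~\ref{properties.23y.it2} to pull the supremum back into $X$ — but the middle of the argument is organized genuinely differently. The paper uses that the directed ideal $I$ is solvex (\cite[Theorem~4.11]{7}) and hence has the extension ideal $\I_{i(I)}$ satisfying $\I_{i(I)}\cap i(X)=i(I)$ (\cite[Propositions~5.5(i) and 5.6]{1}); it then takes, by \cite[Theorem~3.4]{PosOp}, a net $(y_\alpha)_\alpha$ in $(\I_{i(I)})_+$ with $y_\alpha\uparrow i(z)$, applies Lemma~\ref{0.0} to each $y_\alpha$, and uses the extension property to conclude that the approximating elements in $i(X)\cap\left]0,y_\alpha\right]$ already lie in $i(I)$. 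You instead work with the projection formula $i(z)=\sup\{i(z)\wedge v : v\in(\I_{i(I)})_+\}$ — legitimate here since $X^\delta$ is Dedekind complete and $i(z)$ is a positive element of the band $i(I)\dd=\I_{i(I)}\dd$ — use directedness of $I$ only to dominate each $v$ by a single $i(x'')$ with $x''\in I_+$, apply Lemma~\ref{0.0} to the infima $i(z)\wedge i(x)$ rather than to net elements, and let plain solidity place the approximants into $I$ (valid because a solid subspace contains $[0,x]$ for each of its positive elements $x$: every upper bound of $\{x,-x\}$ is an upper bound of $\{t,-t\}$ whenever $0\le t\le x$). The trade-off: your route bypasses the solvexity/extension-property machinery on which the paper's proof leans, replacing it with elementary solidity plus a domination argument, at the cost of importing slightly more vector-lattice theory (the description of the generated ideal, $i(I)^{\textnormal{d}}=\I_{i(I)}^{\textnormal{d}}$, and the projection formula, the last being itself a short consequence of the same \cite[Theorem~3.4]{PosOp}). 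Both proofs are of comparable length and rest on the same three pillars — Dedekind completeness, Proposition~\ref{1}, and Lemma~\ref{0.0} — so the choice between them is mainly a matter of which toolkit one prefers to quote.
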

\begin{proof}
Since $X$ is Archimedean, $X$ has the Dedekind completion $(X^\delta,i)$. Let $I$ be a directed ideal in $X$. Then $I$ is solvex and therefore has the extension property. The ideal $\I_{i(I)}$ in $X^\delta$ as an extension ideal of $I$, i.e.\ we have $\I_{i(I)} \cap i(X) = i(I)$. Let $z\in (I\dd)_+$ be such that \mbox{$z\neq 0$}.
Since every pervasive pre-Riesz space is fordable, by Proposition~\ref{1} we obtain $z\in I\dd = \left[i(I)\dd\right]i$, from which it follows
$i(z) \in i(I)\dd \sub \I_{i(I)}\dd$.
By \c[Theorem 3.4]{PosOp} in a vector lattice $Y$ the band $\B_A$ generated by an ideal $A$ has the form $\left\{x\in Y \mid| \exists (x_\alpha)_\alpha \t{ in } A_+: 0\leq x_\alpha\up |x|\right\}$. Since $i(z)$ is a positive element in the band $\I_{i(I)}\dd$, there exists a net $(y_\alpha)_\alpha$ in $(\I_{i(I)})_+$ such that $0\leq y_\alpha \up i(z)$.

Let $M:=I\cap\left]0,z\right]$. We show that $\sup i(M) = i(z)$. 
Define for every $y_\alpha\neq 0$ the set
$\widehat{M}_\alpha := i(X)\cap\left]0,y_\alpha\right]$.
Since $I$ is directed, for every $\alpha$ we have $y_\alpha\in\I_{i(I)}$. This yields 
$\widehat{M}_\alpha=i(X)\cap\left]0,y_\alpha\right] \sub i(X)\cap\I_{i(I)}=i(I)$.
Thus for every $\alpha$ we have
$\widehat{M}_\alpha =i(I)\cap\left]0,y_\alpha\right]$ and, consequently, $\widehat{M}_\alpha\sub i(M)$.

Due to $X$ being pervasive by Lemma~\ref{0.0} we have $y_\alpha = \sup \widehat{M}_\alpha$. Thus in $X^\delta$ we obtain
\begin{equation*}
y_\alpha = \sup \widehat{M}_\alpha \leq \sup \bigcup_\alpha \widehat{M}_\alpha \leq \sup i(M) \leq i(z).
\end{equation*}
Taking supremum over all $\alpha$ yields
$\sup i(M)=i(z)\in i(X)$. By Corollary~\ref{properties.23y}~\ref{properties.23y.it2} the supremum $\sup M$ exists in $X$ and $\sup M=z$.
\end{proof}

From the vector lattice theory we know that every o-closed ideal is a band and every band is o-closed. Next we show both implications in pervasive pre-Riesz spaces, considering s-closed ideals instead of o-closed ideals. Notice that in both cases we assume that the ideal is directed and in Theorem~\ref{1.3} we have the condition that its double disjoint complement is likewise directed. These conditions are natural, as all bands and ideals in vector lattices are directed. 
\begin{theorem}\label{1.3}
Let $X$ be an Archimedean pervasive pre-Riesz space and $I$ an s-closed directed ideal in $X$.
Then $(I\dd)_+\sub I_+$.

If, in addition, the band $I\dd$ is directed, then $I=I\dd$, i.e.\ $I$ is a band.
\end{theorem}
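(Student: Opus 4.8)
The plan is to obtain the first assertion $(I\dd)_+\sub I_+$ directly from Lemma~\ref{1.2} together with the definition of s-closedness, and then to bootstrap the second assertion $I=I\dd$ from it using the hypothesis that $I\dd$ is directed.

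For the first assertion I would take an arbitrary $z\in(I\dd)_+$. Since $X$ is Archimedean and pervasive and $I$ is a directed ideal, Lemma~\ref{1.2} applies and gives $z=\sup\left(I\cap[0,z]\right)$. As $z\in X_+$, the s-closedness of $I$ then immediately yields $z\in I$, and being positive $z\in I_+$. Since $z$ was arbitrary, $(I\dd)_+\sub I_+$.

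For the second assertion I would first recall the standard inclusion $I\sub I\dd$, valid for any subset: every $a\in I$ is disjoint from every element of $I^{\t{d}}$, hence $a\in(I^{\t{d}})^{\t{d}}=I\dd$. It remains to prove the reverse inclusion $I\dd\sub I$, and this is exactly where the directedness of $I\dd$ enters. Given any $x\in I\dd$, directedness of $I\dd$ provides some $z\in I\dd$ with $0,x\leq z$. Then $z\in(I\dd)_+$, and since $I\dd$ is a linear subspace and $z-x\geq 0$ we also have $z-x\in(I\dd)_+$. By the first assertion both $z$ and $z-x$ lie in $I_+\sub I$, and because $I$ is a linear subspace it follows that $x=z-(z-x)\in I$. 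This establishes $I\dd\sub I$, and together with $I\sub I\dd$ we conclude $I=I\dd$, i.e.\ $I$ is a band.

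I do not expect a genuine obstacle here, as Lemma~\ref{1.2} carries the analytic weight of the argument; the only points requiring care are the routine inclusion $I\sub I\dd$ and the decomposition of an element of $I\dd$ into a difference of positive elements, which is precisely the step that consumes the hypothesis that $I\dd$ is directed. This also indicates why that hypothesis cannot simply be dropped: without it one cannot reduce an arbitrary element of $I\dd$ to positive elements, and it is only to positive elements that the combination of Lemma~\ref{1.2} and s-closedness can be applied.
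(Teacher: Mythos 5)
Your proof is correct and follows essentially the same route as the paper: the first assertion is obtained exactly as there, by combining Lemma~\ref{1.2} with s-closedness, and your second part merely spells out element-by-element what the paper writes compactly as $I\dd=(I\dd)_+-(I\dd)_+\sub I_+-I_+=I$ (together with the standard inclusion $I\sub I\dd$, which the paper leaves implicit). No gaps.
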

\begin{proof}
Let $z\in (I\dd)_+$, $z\neq 0$. Since $X$ is pervasive and $I$ is directed, by Lemma~\ref{1.2} we have $z=\sup\left(I\cap[0,z]\right)$. Due to $I$ being s-closed it follows $z\in I_+$. Additionally, if $I\dd$ is directed, then we have $I\dd=(I\dd)_+-(I\dd)_+ \sub I_+-I_+ = I$.
\end{proof}

\begin{theorem}\label{1.3.5}
Let $X$ be an Archimedean pervasive pre-Riesz space and let $I$ be a directed ideal in $X$. 
Then $\Big\{x\in X_+ \hs\Big|\hs x=\sup \left(I\cap[0,x]\right)\Big\} = (I\dd)_+$.
\end{theorem}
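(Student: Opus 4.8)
The plan is to establish the set equality by proving the two inclusions separately, and one of them is essentially already in hand.

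For the inclusion $\supseteq$ I would invoke Lemma~\ref{1.2} directly. If $z\in(I\dd)_+$ then in particular $z\in X_+$, and Lemma~\ref{1.2} states precisely that $z=\sup\left(I\cap[0,z]\right)$. Hence $z$ lies in the left-hand set, and this inclusion requires no further work (the case $z=0$ being trivial since $I\cap[0,0]=\{0\}$).

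For the reverse inclusion $\subseteq$ I would fix $x\in X_+$ with $x=\sup\left(I\cap[0,x]\right)$ and pass to the Dedekind completion $(X^\delta,i)$, which is a vector lattice cover of $X$. Since the supremum $\sup\left(I\cap[0,x]\right)$ exists in $X$, Corollary~\ref{properties.23y}~\ref{properties.23y.it1} transports it upward, giving $i(x)=\sup i\left(I\cap[0,x]\right)$ in $X^\delta$. The goal is then to show $i(x)\in i(I)\dd$ and afterwards descend back to $X$. To obtain $i(x)\in i(I)\dd$, note first that $i\left(I\cap[0,x]\right)\sub i(I)\sub i(I)\dd=\B_{i(I)}$, since any set is contained in its double disjoint complement. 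In the Archimedean vector lattice $X^\delta$ the disjoint complement $i(I)\dd$ is a band, hence an o-closed ideal, so it satisfies the Luxemburg--Zaanen property \eqref{eq_zclosed}: the supremum of any of its subsets, whenever it exists in $X^\delta$, again belongs to $i(I)\dd$. Applying this to the subset $i\left(I\cap[0,x]\right)$, whose supremum is $i(x)$, yields $i(x)\in i(I)\dd$.

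Finally I would descend back to $X$. Because $X$ is pervasive it is fordable, so Proposition~\ref{1} (applied to $A=I$) gives $\left[i(I)\dd\right]i=I\dd$. From $i(x)\in i(I)\dd$ we then read off $x\in I\dd$, and since $x\geq 0$ this means $x\in(I\dd)_+$; combining the two inclusions finishes the proof. The one step that deserves care is the middle one, and the key realization is that it should be phrased as an instance of the band (o-closedness) property in the Dedekind completion rather than attempting to push $|i(y)|\wedge i(x)=0$ through the infinite distributive law. The latter route would require the set $i\left(I\cap[0,x]\right)$ to be upward directed in $X^\delta$, which is not clear because $i(I)$ need not be a sublattice; routing through the Luxemburg--Zaanen characterization avoids this and needs neither directedness nor positivity, so that everything reduces to the already-available Corollary~\ref{properties.23y} and Proposition~\ref{1}.
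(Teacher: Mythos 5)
Your proof is correct, and for the inclusion $\supseteq$ it coincides with the paper's (both invoke Lemma~\ref{1.2}). For the inclusion $\subseteq$, however, you take a genuinely different route. The paper stays inside a vector lattice cover and argues by contradiction using pervasiveness directly: if $y\in I^{\t{d}}$ and $y\not\perp x$, pervasiveness yields $\tilde{x}\in X$ with $0<i(\tilde{x})\leq|i(y)|\Inf i(x)$; each $a\in I\cap[0,x]$ is then disjoint to $\tilde{x}$, whence $i(a)+i(\tilde{x})=i(a)\Sup i(\tilde{x})\leq i(x)$, so $x-\tilde{x}$ is an upper bound of $I\cap[0,x]$, contradicting $x=\sup\left(I\cap[0,x]\right)$. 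You instead lift the supremum to $X^\delta$ by Corollary~\ref{properties.23y}~\ref{properties.23y.it1}, invoke the classical fact that the band $i(I)\dd$ of the Archimedean vector lattice $X^\delta$ is closed under existing suprema of arbitrary subsets (property \eqref{eq_zclosed}), and descend via Proposition~\ref{1}. Both arguments are sound. Yours is shorter, delegates the lattice-theoretic work to classical band theory, and cleanly separates the hypotheses: in your argument pervasiveness enters the inclusion $\subseteq$ only through fordability (needed for Proposition~\ref{1}), so that inclusion in fact holds in every fordable Archimedean pre-Riesz space, pervasiveness proper being required only for $\supseteq$ via Lemma~\ref{1.2}. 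The paper's argument is more self-contained, avoiding Proposition~\ref{1} and the band structure of the completion altogether. One inaccuracy in your closing remark, which does not affect your proof: the infinite distributive law $\left(\sup_\alpha x_\alpha\right)\Inf z=\sup_\alpha\left(x_\alpha\Inf z\right)$ holds in any vector lattice for arbitrary existing suprema, with no upward directedness assumption, so the route you dismissed is also viable; indeed it is essentially how the fact about disjoint complements that you cite is proved, and it is what the paper's direct argument accomplishes by means of pervasiveness.
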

\begin{proof}
As $X$ is pervasive and $I$ is a directed ideal, by Lemma~\ref{1.2} for every $x\in (I\dd)_+$ we have $x= \sup\left(I\cap[0,x]\right)$. It follows
$\Big\{x\in X_+ \hs\Big|\hs x=\sup \left(I\cap[0,x]\right)\Big\} \supseteq (I\dd)_+$.
To see the converse inclusion,
let $x\in X_+$, $x\neq 0$, such that $x=\sup\left(I\cap[0,x]\right)$. Let $y\in I^{\t{d}}$, i.e.\ for every $z\in I$ we have $y\perp z$. In particular, 
\begin{equation}\label{gl6-1}
\forall a\in I\cap [0,x]\colon y\perp a.
\end{equation}
We show that $y\perp x$, then it follows $x\in I\dd$. Suppose, $y\not\perp x$. Then in a vector lattice cover $(Y,i)$ of $X$ we have $|i(y)|\Inf i(x)>0$. Since $X$ is pervasive, there exists an element $\tilde{x}\in X$ with $0<i(\tilde{x}) \leq |i(y)|\Inf i(x)$. That is, we have the two inequalities
\begin{equation}\label{gl6-2}
0<\tilde{x}\leq x \quad\t{ and }\quad 0<i(\tilde{x})\leq |i(y)|.
\end{equation}
Fix an $a\in I\cap [0,x]$. Then $0< a \leq x$ and \eqref{gl6-2} yield
\begin{equation}\label{gl6-3}
0\leq 0\Sup i(\tilde{x})< i(a)\Sup i(\tilde{x}) \leq i(x)\Sup i(\tilde{x}) = i(x).
\end{equation}
From \eqref{gl6-1} and \eqref{gl6-2} it follows $a\perp\tilde{x}$. Since $a\perp \tilde{x}$ in $X$ is equivalent to $i(a)\perp i(\tilde{x})$ in $Y$, we obtain $i(a)\Sup i(\tilde{x}) = i(a) + i(\tilde{x})$.
Then due to \eqref{gl6-3} we have $i(a) + i(\tilde{x}) \leq i(x)$, and so, $a \leq x -\tilde{x}$. Since this is true for every $a\in I\cap [0,x]$, it follows $x = \sup\left(I\cap[0,x]\right) \leq x-\tilde{x}$, a contradiction to $\tilde{x}>0$.
Thus our assumption $y\not\perp x$ is false, i.e.\ we obtain for every $y\in I^{\t{d}}$ that $y\perp x$. It follows $x\in (I\dd)_+$.
\end{proof}

\begin{corollary}\label{1.4}
Let $X$ be an Archimedean pervasive pre-Riesz space. Then every directed band in $X$ is s-closed.
\end{corollary}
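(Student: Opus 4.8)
The plan is to read the corollary off Theorem~\ref{1.3.5} directly; the only extra ingredient is the defining property $B\dd=B$ of a band, and no genuine analytic work remains to be done.

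First I would record that a directed band $B$ is in particular a directed ideal: by \c[Theorem~5.14]{2} every band in $X$ is an ideal, so $B$ is a directed ideal and Theorem~\ref{1.3.5} applies with $I:=B$. It yields the identity
\[
\Big\{x\in X_+ \hs\Big|\hs x=\sup\left(B\cap[0,x]\right)\Big\} = (B\dd)_+.
\]
Next I would invoke the band property itself. Since $B$ is a band, $B\dd=B$, and therefore $(B\dd)_+=B_+$. Combining this with the displayed identity, the set of all $x\in X_+$ satisfying $x=\sup\left(B\cap[0,x]\right)$ coincides with $B_+$.

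Finally, to verify s-closedness straight from its definition, I would take an arbitrary $z\in X_+$ with $z=\sup\left(B\cap[0,z]\right)$; by the equality just obtained, $z\in B_+\sub B$. This is exactly the defining condition for $B$ to be s-closed, so the argument is complete. I do not expect any real obstacle here: all the substance sits in Theorem~\ref{1.3.5} (which in turn rests on Lemma~\ref{1.2} and the pervasiveness hypothesis), and the corollary is obtained by substituting $B\dd=B$ into that theorem. The one point worth flagging is that the directedness assumption on $B$ is genuinely used, since it is precisely what licenses the appeal to Theorem~\ref{1.3.5}.
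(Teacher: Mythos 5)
Your proof is correct and follows exactly the paper's own route: apply Theorem~\ref{1.3.5} with $I:=B$ (noting that a directed band is a directed ideal) and substitute $B\dd=B$ to conclude that every $z\in X_+$ with $z=\sup\left(B\cap[0,z]\right)$ lies in $B_+$. The paper's proof is just a compressed version of the same argument, so there is nothing to add.
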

\begin{proof}
Let $B$ be a directed band in $X$. In particular, $B$ is a directed ideal. 
By Theorem~\ref{1.3.5} it follows $\Big\{x\in X_+ \hs\Big|\hs x=\sup\left(I\cap[0,x]\right)\Big\} = (B\dd)_+ = B_+$.
\end{proof}

The following example shows that in Theorem~\ref{1.3} the condition of $I\dd$ being directed can not be omitted. Note that in Example~\ref{Namioka1} the pre-Riesz space is not only pervasive, but has other strong additional conditions.
\begin{example}\label{Namioka1}
\textit{In an Archimedean pervasive pre-Riesz space with RDP and an order unit a directed s-closed ideal need not be a band.}

Consider the vector space
\[X:=\left\{f\in C([-1,1]) \mid| f(0) = \frac{f(-1)+f(1)}{2}\right\}\]
endowed with pointwise order. The space is Archimedean as a subspace of $C([-1,1])$ and directed, since it contains the order unit $\1_{[-1,1]}$. Thus $X$ is pre-Riesz. It was shown in \c[V.2, Example 9]{VulikhWeber} that $X$ is not a vector lattice, but has the RDP. By \c[Example 18]{6} the space $C([-1,1])$ is a vector lattice cover of $X$. We show that $X$ is pervasive. Let $y\in C([-1,1])$ be such that $y> 0$. Then there exists some $p\in[-1,1]$ and a neighbourhood $U(p)$ of $p$ such that $y(U(p))>0$. Thus we can find an $x\in C([-1,1])$ such that $x(-1)=x(0)=x(1)=0$ and $0<x(t)\leq y(t)$ for every $t$ in a subset of $U(p)\ohne\left\{-1,0,1\right\}$. It follows $0<x\leq y$ and $x\in X$, i.e.\ $X$ is pervasive.

The linear subspace
\[I:=\left\{ f\in X \mid| f([-\textstyle{\frac{1}{2}},0]\cup\left\{-1,1\right\}) = 0 \right\}\]
of $X$ is an ideal. Moreover, it is even a vector lattice with the pointwise order inherited from $X$. However, $I$ is not a band. Let namely
\[g(t):=\begin{cases}
		2t+1 \quad & t\in\left[-1,-\frac{1}{2}\right[,\\
		0 & t\in\left[-\frac{1}{2},0\right],\\
		t & t\in\left]0,1\right].
\end{cases}\]
Then $g \in X$. Since functions in $X$ are continuous, we have
\[I^\t{d}=\left\{f\in X \mid| f\left(\left]-1,-\textstyle{\frac{1}{2}}\right]\cup\left[0,1\right[\right) = 0\right\}=\left\{f\in X \mid| f\left(\left[-1,-\textstyle{\frac{1}{2}}\right[\cup\left]0,1\right]\right) = 0\right\}.\]
It follows $I\dd=\left\{f\in X \mid| f\left(\left[-\textstyle{\frac{1}{2}},0\right]\right) = 0\right\}$.
Clearly, $g\in I\dd$. However, due to $g(-1)=-1$, $g(1)=1$ we have $g\notin I$, i.e.\ the ideal $I$ is not a band.

We show that $I$ is s-closed. Let $s\in X_+$, $s\neq 0$, be such that $s=\sup \left(I\cap[0,s]\right)$. We have to establish that $s\in I$.
We first show $s\left(\left[-\tfrac{1}{2},0\right]\right)=0$.
Suppose there exists some $t\in\hs]-\frac{1}{2},0[$ with $s(t)>0$. Due to $s$ being continuous we can find an open neighbourhood $U(t)\sub\hs]-\frac{1}{2},0[$ of $t$, where $s$ is strictly positive. Thus there exists a continuous function $a\in X_+$, $a\neq 0$, with support in $U(t)$ and $a(t)< s(t)$. The function $\tilde{s}:=s-a>0$ belongs to $X$.
Since on the interval $]-\frac{1}{2},0[$ the elements of $I\cap[0,s]$ vanish, we have for every $x\in I\cap[0,s]$ that $x\leq \tilde{s}$. This is a contradiction to $s$ being the supremum of the set $I\cap[0,s]$. We obtain $s\left(\left]-\frac{1}{2},0\right[\right)=0$. Due to $s\in X$ being continuous, it follows $s(-\frac{1}{2})=s(0)=0$. However, $s\geq 0$ and $0=s(0) =\frac{s(-1)+s(1)}{2}$ imply $s(-1)=s(1)=0$. Altogether, we have $s([-\frac{1}{2},0]\cup\left\{-1,1\right\})=0$, i.e.\ $s\in I$.
We conclude that $I$ is an s-closed ideal.

We show that $I\dd$ is not directed. Indeed, we have $g,-g\in I\dd$. However, if $f\in X$ is such that $g,-g\leq f$, then $1=-g(-1)\leq f(-1)$ and $1=g(1)\leq f(1)$. It follows $f(0) = \frac{f(-1)+f(1)}{2} \geq 1$, i.e.\ $f(0)\neq 0$. We obtain $f\notin I\dd$. Thus $I\dd$ is not directed.
\end{example}

\section{Relationship between order closed and supremum closed ideals}
In this section we focus on o-closed directed ideals and its relationship to s-closed ideals and to bands.
Recall that in an Archimedean vector lattice an ideal $I$ is o-closed if and only if $I$ is a band. In Theorem~\ref{2.1} we generalize this fact for pervasive pre-Riesz spaces with RDP.
In Corollary~\ref{2.2} we show that in pre-Riesz spaces with RDP o-closed directed ideals are s-closed. Moreover, in pervasive pre-Riesz spaces every s-closed directed ideal $I$ is o-closed, provided $I\dd$ is directed. Example~\ref{Namioka2} shows that we can not drop the condition that $I\dd$ is directed.
We consider other examples, which rule out most of the remaining questions about the relationship of bands, o-closed ideals and s-closed ideals without additional assumptions on the pre-Riesz space.

\smallskip
To establish Theorem~\ref{2.1} we first need the following lemma.
\begin{lemma}\label{closedness.13a}
Let $X$ be an Archimedean pre-Riesz space with RDP and $I\sub X$ an ideal. Then for every $x\in X_+$ the set $I\cap [0,x]$ is directed.
\end{lemma}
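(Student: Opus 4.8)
The plan is to verify directly the two requirements in the definition of a directed set for $A := I \cap [0,x]$. Note first that $0 \in I$ (as $I$ is a linear subspace) and $0 \leq a$ for every $a \in A$, since all elements of $A$ are positive. Hence $0 \in A$ is a common lower bound of any pair of elements of $A$, so the ``downward'' part of directedness is immediate and the whole task reduces to producing, for arbitrary $a, b \in A$, a common \emph{upper} bound that again lies in $A$.

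So fix $a, b \in I \cap [0,x]$, and introduce the auxiliary element $w := a + b$. Since $I$ is a linear subspace and $a, b \in I_+$, we have $w \in I_+$, and because $a, b \geq 0$ we get $a, b \leq w$; we also have $a, b \leq x$ by assumption. Thus $a, b \leq w$ and $a, b \leq x$, which is precisely the hypothesis of the interpolation reformulation of the RDP recorded in the preliminaries. Applying it with $x_1 = a$, $x_2 = b$, $x_3 = w$, $x_4 = x$ yields an element $z \in X$ with $a, b \leq z$, $z \leq w$ and $z \leq x$. In particular $0 \leq a \leq z \leq x$, so $z \in [0,x]$, and $z$ dominates both $a$ and $b$.

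It remains to show $z \in I$, which I expect to be the only non-routine point and which is where solidity enters. From $0 \leq z \leq w$ with $w \in I$ I would check the solidity condition $\{z,-z\}^u \supseteq \{w,-w\}^u$: if $s$ is an upper bound of both $w$ and $-w$, then $z \leq w \leq s$ gives $s \geq z$, while $z \geq 0$ gives $-z \leq 0 \leq w \leq s$, hence $s \geq -z$; thus $s$ is an upper bound of $\{z,-z\}$. Since $I$ is solid and $w \in I$, this forces $z \in I$. Therefore $z \in I \cap [0,x]$ is the desired common upper bound of $a$ and $b$ inside $A$, and together with the lower bound $0$ this proves that $I \cap [0,x]$ is directed. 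The crux of the argument is the combined use of the RDP interpolant---chosen to satisfy $z \leq a+b$ and $z \leq x$ simultaneously---with the pre-Riesz translation of the classical fact that an order interval below a positive ideal element lies in the ideal, expressed here through the inclusion of upper-bound sets.
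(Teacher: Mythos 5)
Your proof is correct, and its skeleton matches the paper's: both arguments produce a common upper bound of the given pair inside $I$, interpolate via the RDP between the pair and the set consisting of that upper bound and $x$, and then use solidity of $I$ to conclude that the interpolant lies in $I$. The one genuine difference is how the upper bound in $I$ is obtained. The paper invokes the standing convention that all ideals are directed: given $a_1,a_2 \in I\cap[0,x]$ it picks $a_3\in I$ with $a_1,a_2\leq a_3$, interpolates $a_1,a_2\leq a\leq x,a_3$, and concludes $a\in I$ from $0\leq a\leq a_3\in I$, leaving the solidity step implicit. You instead take $w:=a+b$, which lies in $I$ because $I$ is a linear subspace and dominates both $a$ and $b$ because they are positive; this removes any appeal to directedness of $I$, so your argument proves the lemma for arbitrary, not necessarily directed, ideals --- a slight strengthening relative to the paper's reading of the statement under its conventions. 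You also spell out the solidity verification $\{z,-z\}^u \supseteq \{w,-w\}^u$, which the paper takes for granted; your verification is correct, using $0\leq z\leq w$ and $w\geq 0$. What the paper's version buys is brevity; what yours buys is independence from the directedness convention and a self-contained justification of the membership $z\in I$.
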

\begin{proof}
Let $X$ have the RDP and let $I\sub X$ be a directed ideal. Let $x\in X_+$. If $x=0$, then the statement is clear. Let $x>0$. Consider two elements $a_1,a_2\in I\cap [0,x]$. Since $I$ is directed, there exists an element $a_3\in I$ such that $a_1,a_2\leq a_3$. Since $X$ has the RDP, the relationship $a_1,a_2\leq x,a_3$ implies that there exists some $a\in X$ such that $a_1,a_2\leq a\leq x,a_3$. Due to $0\leq a\leq a_3\in I$ we have $a\in I$. Thus we established that for every $a_1,a_2\in I\cap [0,x]$ there exists an element $a\in I\cap [0,x]$ such that $a_1,a_2\leq a$, i.e.\ $I\cap [0,x]$ is upward directed. Due to $0\in I\cap [0,x]$ it is immediate that $I\cap [0,x]$ is downward directed. 
\end{proof}

An example of a pre-Riesz space which satisfies all assumptions of the subsequent Theorem~\ref{2.1}, but is not a vector lattice, can be found in \c[Example~1.58]{AliTou}. There it is shown that the space $C^1[0,1]$ of all differentiable functions on the interval $[0,1]$ has the RDP but fails to be a vector lattice. It is clear that $C^1[0,1]$ is pervasive in its vector lattice cover $C[0,1]$.
Recall that in vector lattices ideals and bands are automatically directed. Furthermore every Archimedean vector lattice is a pervasive pre-Riesz space and has the RDP. 
Therefore the following Theorem~\ref{2.1} is a proper generalization of the fact that in an Archimedean vector lattice an ideal $I$ is o-closed if and only if $I=I\dd$.

\begin{theorem}\label{2.1}
Let $X$ be an Archimedean pervasive pre-Riesz space with RDP and $I$ an o-closed directed ideal in $X$. If $I\dd$ is directed, then $I$ is a band, i.e.\ $I = I\dd$.

In particular, then $I= \left\{x\in X \mid| \t{there exists a net } (x_\alpha)_\alpha \t{ in } I,\hs x_\alpha \xrightarrow{o} x\right\}$.
\end{theorem}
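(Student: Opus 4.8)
The plan is to prove the two assertions separately, leaning on the s-closed theory already developed.

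The plan is to deduce the result from Theorem~\ref{1.3} by first showing that the o-closed directed ideal $I$ is automatically s-closed once $X$ has the RDP. Indeed, Theorem~\ref{1.3} already delivers the conclusion $I=I\dd$ for any s-closed directed ideal whose double disjoint complement is directed, so the entire content of the theorem reduces to the implication \emph{o-closed $\Rightarrow$ s-closed} in the presence of the RDP. This is where I expect the only real work to lie.

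To prove s-closedness, I would fix $z\in X_+$ with $z=\sup\left(I\cap[0,z]\right)$ and aim to show $z\in I$. The key device is Lemma~\ref{closedness.13a}: since $X$ has the RDP, the truncated set $D:=I\cap[0,z]$ is (upward) directed. I would then view $D$ as its own index set under the inherited order and consider the net $(x_d)_{d\in D}$ given by $x_d:=d$. This net is increasing with $\sup_{d}x_d=\sup D=z$, hence $x_d\up z$; taking $z_d:=z-x_d\down 0$ shows $x_d\stackrel{o}{\rightarrow}z$. Because $(x_d)_{d\in D}$ lies in $I$ and $I$ is o-closed, the limit $z$ belongs to $I$. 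As $z\in X_+$ was arbitrary, $I$ is s-closed. The main obstacle is precisely this passage: without the RDP the set $I\cap[0,z]$ need not be directed, so one could not organise it as an increasing net order-converging to its supremum, and the o-closedness of $I$ could not be brought to bear.

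Once s-closedness is in hand, the first assertion follows immediately: $I$ is an s-closed directed ideal with $I\dd$ directed, so Theorem~\ref{1.3} gives $I=I\dd$, i.e.\ $I$ is a band. For the final displayed identity, the inclusion ``$\sub$'' is witnessed by constant nets, and the reverse inclusion is exactly the definition of $I$ being o-closed; hence $I$ coincides with the set of all o-limits of nets from $I$. No further assumptions are needed for this last step.
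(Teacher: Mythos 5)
Your proof is correct, and there is no circularity in it: Lemma~\ref{closedness.13a} and Theorem~\ref{1.3} both precede Theorem~\ref{2.1}, and neither depends on it. The route differs from the paper's only in organization, since the ingredients turn out to be identical. The paper proves the theorem directly: for $a\in(I\dd)_+$ it invokes Lemma~\ref{1.2} (pervasiveness) to get $a=\sup\left(I\cap[0,a]\right)$, then Lemma~\ref{closedness.13a} (RDP) to make $I\cap[0,a]$ a directed index set, and then runs exactly your net argument ($a_\alpha:=\alpha$ for $\alpha\in I\cap[0,a]$, so that $a_\alpha\uparrow a$) to conclude $a\in I$ from o-closedness; directedness of $I\dd$ finally gives $I\dd=(I\dd)_+-(I\dd)_+\sub I$. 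You instead factor the argument into two pieces: first, o-closed together with RDP implies s-closed, which is precisely the paper's later Corollary~\ref{2.2}~\ref{2.2.it1} and which you prove with the same net construction; second, an appeal to Theorem~\ref{1.3}, inside whose proof Lemma~\ref{1.2} (pervasiveness) is hidden. So your proof is the paper's proof with the two lemmas applied in the opposite order and the middle step packaged as s-closedness. What your factorization buys is modularity and reuse of the already established Theorem~\ref{1.3}; what the paper's direct proof buys is that the separate roles of pervasiveness and of the RDP are visible within a single self-contained argument. Your treatment of the final displayed identity (constant nets for one inclusion, o-closedness for the other) is correct and makes explicit what the paper leaves implicit.
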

\begin{proof}
We only have to show $I\dd\sub I$. Let $x\in I\dd$. We establish that there exists a net $(x_\alpha)_\alpha$ in $I$ with $x_\alpha \xrightarrow{o}x$. Since $I$ is o-closed, we then conclude that $x\in I$. As $I\dd$ is directed, for every $x\in I\dd$ there are $a, b\in (I\dd)_+$ such that $x=a-b$. Thus it is sufficient to find for every $a\in (I\dd)_+$ a net $(a_\alpha)_\alpha$ in $I$ with $a_\alpha \xrightarrow{o}a$.

Let $a\in(I\dd)_+$ and $M:=I\cap [0,a]$. We use the set $M \sub X$ as the index set for the net $(a_\alpha)_\alpha$ in $I$. 
Since $X$ has the RDP, by Lemma~\ref{closedness.13a} the set $M$ is directed. Since $X$ is pervasive, by Lemma~\ref{1.2} we have $a=\sup M$.
For every $\alpha \in M$ let $a_\alpha:=\alpha$. Then the net $(a_\alpha)_{\alpha\in M}$ is upward directed and due to $\sup M =a$ we have $a_\alpha \uparrow a$. Let $z_\alpha := a - a_\alpha$. Then $0 \leq a - a_\alpha = z_\alpha \downarrow 0$, i.e.\ the net $(a_\alpha)_{\alpha\in M}$ order converges to $a$.
\end{proof}
It is open whether Theorem~\ref{2.1} remains true if we drop the pervasiveness or the RDP or if we do not assume $I\dd$ to be directed.
The following example shows that in Theorem~\ref{2.1} we can not drop simultaneously the conditions that $X$ is pervasive and has the RDP.
\begin{example}\label{ex}
\textit{In an Archimedean pre-Riesz space an o-closed and s-closed directed ideal $I$, for which the double disjoint complement $I\dd$ is directed, need not be a band.}

Let $X:=\left\{p\in C(\RR)\mid|p \t{ polynomial of degree at most } 2\right\}$ with pointwise order. Then $X$ is Archimedean and directed and therefore a pre-Riesz space. The space $X$ is not pervasive, since there does not exist a non-zero positive element in $X$ which is less or equal both, $q(t)=t^2$ and $c(t)=1$, where $t\in\RR$. For the same reason $X$ does not have the RDP.

Consider the subspace $I:=\left\{\lambda q \mid| \lambda\in\RR\right\}$ of $X$. Note that for every $f\in I$ we have $f(0)=0$ and $f'(0)=0$. It follows that $I$ is a directed ideal.
Moreover, $I$ is o-closed. Indeed, a net $(x_\alpha)_\alpha = (\lambda_\alpha q)_\alpha$ of elements in $I$ o-converges to an $x\in X$ if and only if the net $(\lambda_\alpha)_\alpha$ converges. In particular, if $(\lambda_\alpha q)_\alpha$  o-converges, then its limit is an element of the form $\lambda q\in I$. Thus $I$ is o-closed. Furthermore, $I$ is s-closed. Indeed, let $A\sub\RR$. The subset $\left\{\lambda q\mid| \lambda\in A\right\}$ of $I$ is order bounded if and only if $A$ is a bounded subset of $\RR$. Let $A\sub\RR$ be bounded. Then the supremum of the set $\left\{\lambda q\mid| \lambda\in A\right\}$ exists in $X$ and equals the element $(\sup A)\cdot q \in I$. To sum up, $I$ is a directed o-closed and s-closed ideal.
The double disjoint complement $I\dd=X$ of $I$ is directed. Clearly, $I$ is not a band.
\end{example}

The following corollary relates s-closedness and o-closedness of directed ideals.
\begin{corollary}\label{2.2}
Let $X$ be an Archimedean pre-Riesz space and $I\sub X$ a directed ideal.

\begin{enumerate}
\item\label{2.2.it1} Let, in addition, $X$ have the RDP. If $I$ is o-closed, then $I$ is s-closed.
\item\label{2.2.it2} Let, in addition, $X$ be pervasive and the band $I\dd$ be directed. If $I$ is s-closed, then $I$ is o-closed.
\end{enumerate}
\end{corollary}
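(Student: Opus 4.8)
The plan is to handle the two parts separately, since they rest on different machinery established earlier. Part~\ref{2.2.it1} will be proved directly by a monotone-net construction analogous to the one in the proof of Theorem~\ref{2.1}, while part~\ref{2.2.it2} will follow essentially for free by chaining together Theorem~\ref{1.3} and Theorem~\ref{KavanGaa}.

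For part~\ref{2.2.it1}, suppose $X$ has the RDP and $I$ is o-closed; I want to verify the defining condition of s-closedness. So I would take $z\in X_+$ with $z=\sup\left(I\cap[0,z]\right)$ and aim to show $z\in I$. Set $M:=I\cap[0,z]$. The key observation is that, because $X$ has the RDP, Lemma~\ref{closedness.13a} guarantees that $M$ is directed. I would then use $M$ itself as the index set of a net, putting $a_\alpha:=\alpha$ for $\alpha\in M$; this net is upward directed, and since $\sup M=z$ we get $a_\alpha\uparrow z$. Setting $z_\alpha:=z-a_\alpha$ gives $0\leq z_\alpha\downarrow 0$ with $\pm(z-a_\alpha)\leq z_\alpha$, so $a_\alpha\xrightarrow{o}z$. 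As each $a_\alpha\in I$ and $I$ is o-closed, I conclude $z\in I$, which is exactly what s-closedness requires.

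For part~\ref{2.2.it2}, suppose $X$ is pervasive, $I\dd$ is directed, and $I$ is s-closed. Here I would simply invoke Theorem~\ref{1.3}: under precisely these hypotheses (pervasive, $I$ an s-closed directed ideal, $I\dd$ directed) it yields $I=I\dd$, so $I$ is a band. Then Theorem~\ref{KavanGaa} tells us that every band in a pre-Riesz space is an o-closed ideal, so $I$ is o-closed, completing the proof.

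I expect the only genuine content to lie in part~\ref{2.2.it1}, and within it the single load-bearing step is the directedness of $I\cap[0,z]$ supplied by Lemma~\ref{closedness.13a}; this is the one point where the RDP is indispensable, since it is what lets me replace the a priori unstructured set $I\cap[0,z]$ by an increasing net whose order limit is $z$. Part~\ref{2.2.it2} presents no real obstacle, as it is a formal consequence of results already in hand.
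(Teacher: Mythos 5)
Your proposal is correct and follows essentially the same route as the paper: part~\ref{2.2.it1} via Lemma~\ref{closedness.13a} (directedness of $I\cap[0,z]$ from the RDP) to build an increasing net indexed by $I\cap[0,z]$ that o-converges to $z$, and part~\ref{2.2.it2} by chaining Theorem~\ref{1.3} with Theorem~\ref{KavanGaa}. The only difference is cosmetic: you spell out the auxiliary net $z_\alpha:=z-a_\alpha$ witnessing the o-convergence, which the paper leaves implicit in the statement $x_\alpha\uparrow x$.
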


\begin{proof}
\ref{2.2.it1} 
Let $I\sub X$ be a directed o-closed ideal and let $x\in X_+$ be such that $x=\sup(I\cap[0,x])$. Then by Lemma~\ref{closedness.13a} the set $I\cap[0,x]$ is directed. Set $x_\alpha:=\alpha$ for every $\alpha \in I\cap[0,x]$. Then for the net $(x_\alpha)_\alpha$ in $I$ we have $x_\alpha \uparrow x$. Due to $X$ being o-closed, $x$ belongs to $I$. Thus $I$ is s-closed.

\ref{2.2.it2} Let $I\sub X$ be a directed and s-closed ideal. Since the band $I\dd$ is directed, by Theorem~\ref{1.3} the ideal $I$ coincides with $I\dd$ and is therefore a band.
By Theorem~\ref{KavanGaa} every band is automatically o-closed.
\end{proof}

From Theorem~\ref{KavanGaa} and Corollary~\ref{2.2}~\ref{2.2.it1} we obtain the following. 
\begin{corollary}\label{2.2cor}
Let $X$ be an Archimedean pre-Riesz space with RDP. Then every directed band in $X$ is s-closed.
\end{corollary}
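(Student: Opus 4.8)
The plan is to obtain the result by chaining the two cited ingredients, so essentially no new work is required beyond checking that the hypotheses match up. First I would invoke Theorem~\ref{KavanGaa}, which holds in \emph{any} pre-Riesz space and asserts that every band is an o-closed ideal. Applied to a directed band $B$ in $X$, this immediately exhibits $B$ as an o-closed ideal; combining this with the standing assumption that $B$ is directed, I conclude that $B$ is a \emph{directed} o-closed ideal.

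Next I would apply Corollary~\ref{2.2}~\ref{2.2.it1}. Its hypotheses are precisely that $X$ is an Archimedean pre-Riesz space with the RDP and that $I\sub X$ is a directed o-closed ideal; its conclusion is that $I$ is s-closed. Since the present corollary assumes $X$ to have the RDP and $B$ has just been shown to be a directed o-closed ideal, I may take $I:=B$ and read off that $B$ is s-closed. This completes the argument.

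The only point deserving attention is the bookkeeping of the directedness hypothesis: Theorem~\ref{KavanGaa} produces an o-closed ideal but says nothing about directedness, so I would carry the directedness of $B$ through as a separate input (it is part of the statement's hypothesis) in order to meet the \textquotedblleft directed ideal\textquotedblright\ requirement of Corollary~\ref{2.2}~\ref{2.2.it1}. I expect no genuine obstacle here: all the substance is already packaged in the earlier results, and in particular the RDP-driven passage from o-closedness to s-closedness---which internally relies on Lemma~\ref{closedness.13a} to guarantee that $I\cap[0,x]$ is directed and hence supports an increasing net converging to $x$---is exactly what Corollary~\ref{2.2}~\ref{2.2.it1} supplies. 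It is worth remarking that pervasiveness plays no role in this direction; only the RDP is needed.
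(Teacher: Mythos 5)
Your proof is correct and follows exactly the paper's own argument: the paper derives Corollary~\ref{2.2cor} precisely by combining Theorem~\ref{KavanGaa} (every band is an o-closed ideal) with Corollary~\ref{2.2}~\ref{2.2.it1} (under RDP, every directed o-closed ideal is s-closed), carrying the directedness hypothesis along just as you do.
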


Next example shows that in Corollary~\ref{2.2}~\ref{2.2.it2} we can not omit the additional condition of $I\dd$ being directed.
\begin{example}\label{Namioka2}
\textit{In an Archimedean pervasive pre-Riesz space with RDP and an order unit a directed s-closed ideal need not be o-closed.}

We return to Example~\ref{Namioka1}. There we considered the ordered vector space
$X=\left\{f\in C([-1,1]) \mid| f(0) = \frac{f(-1)+f(1)}{2}\right\}$, which is an Archimedean pervasive pre-Riesz space with RDP and an order unit. Let the ideal $I$ and the function $g$ be as in Example~\ref{Namioka1}.
We already established that $I$ is s-closed and a vector sublattice of $X$. Moreover, $I\dd$ is not directed.
We show that $I$ is not o-closed. For every $n\in\NN_{\geq 2}$ define functions
\[g_n (t):=\begin{cases}
		-tn-n & t\in[-1,-1+\textstyle{\frac{1}{n+2}}], \\
		g(t) & t\in\hs]-1+\textstyle{\frac{1}{n+2}},1-\textstyle{\frac{1}{n+1}}], \hs\hs\qquad \t{ and}\\
		-nt+n & t\in\hs ]1-\textstyle{\frac{1}{n+1}},1] \\
\end{cases}\]
\[u_n(t):=\begin{cases}
g(t)-g_n(t) & t\in[-1,-1+\textstyle{\frac{1}{n+2}}]\cup[1-\textstyle{\frac{1}{n+1}},1],\\
nt+1 & t\in[-\textstyle{\frac{1}{n}},0],\\
-nt+1& t\in[0,\textstyle{\frac{1}{n}}],\\
0 & \t{elsewhere}.
\end{cases}
\]
For every $n\in\NN_{\geq 2}$ we have $g_n \in I$ and $u_n \in X$. A straightforward calculation yields
$\pm(g-g_n)\leq u_n$ and $u_n\downarrow 0$.
Thus the sequence $(g_n)_n$ in $I$ order converges to $g\notin I$, i.e.\ $I$ is not o-closed.
\end{example}

It is open whether in pervasive pre-Riesz spaces o-closed directed ideals are s-closed. The next example shows that in arbitrary pre-Riesz spaces this is not true.
\begin{example}\label{ex1}
\textit{In an Archimedean pre-Riesz space an o-closed directed ideal need not be s-closed.}

For the interval $A:=[0,1]\cup\left\{2\right\}\sub\RR$ consider the vector space $B(A)$ of bounded functions on $A$ and a linear subspace of $B(A)$ given by
\[X=\left\{\lambda \1_A\mid| \lambda\in\RR\right\}\oplus\lin\left\{\1_{\left\{t,2\right\}} \mid| t\in[0,1]\right\}.\]
Let $X$ be ordered pointwise. The characteristic function $\1_{A}$ is an order unit in $X$, hence $X$ is directed. Since $B(A)$ is Archimedean, $X$ is likewise Archimedean. Thus $X$ is a pre-Riesz space.
However, $X$ is not a vector lattice, as for $x_1:=\1_{\left\{0,2\right\}}$ and $x_2:=\1_{\left\{1,2\right\}}$ there does not exist a supremum in $X$. Indeed, $s_1:=x_1+x_2\in X$ and $s_2:=\1_{A}\in X$ are both upper bounds of $x_1$ and $x_2$. Suppose, $u\in X$ is supremum of $x_1$ and $x_2$. Then we have $x_1,x_2\leq u\leq s_1, s_2$. It follows $u(0)=u(1)=u(2)=1$ and $u(t)=0$ for every $t\in A\ohne\left\{0,1,2\right\}$. However, such a function $u$ can not be written as a linear combination of elements in $X$, i.e.\ $u\notin X$, a contradiction. Notice that this also establishes that $X$ does not have the RDP.

Clearly, $B(A)$ is a vector lattice. We show that $X$ is order dense in $B(A)$. To that end, let $f\in B(A)$. We can assume $f\geq 0$ or else translate $f$ by adding the function $n\1_{A}\in X$ for a sufficiently large $n\in\NN_{\geq 0}$. To write $f$ as an infimum of a subset of $X$, we define a family of functions as follows. Let $a, b \in[0,1]$ be such that $a\neq b$. Consider the following three cases.

Case 1: If $f(a)<f(2)$, then set $f_a$ to be the function
\[f_a:=||f||_\infty \1_A \hs +\hs (f(a)-||f||_\infty)\1_{\left\{a,2\right\}} \hs +\hs (f(2)-f(a))\1_{\left\{b,2\right\}}.\]
We have $f_a\in X$, $f_a(a)=f(a)$ and $f_a(2)=f(2)$. Furthermore, $f_a(b)=||f||_\infty+f(2)-f(a)>||f||_\infty$ and $f_a(t)=||f||_\infty\geq f(t)$ for $t\in A\ohne\left\{a,b,2\right\}$, i.e.\ $f_a\geq f$ pointwise.

Case 2: If $f(a)=f(2)$, then set $f_a$ to be the function
\[f_a:=||f||_\infty \1_A + (f(a)-||f||_\infty)\1_{\left\{a,2\right\}}.\]
We have $f_a\in X$, $f_a(a)=f(a)$ and $f_a(2)=f(2)$. Furthermore $f_a(t)=||f||_\infty\geq f(t)$ for $t\in A\ohne\left\{a,2\right\}$, i.e.\ $f_a\geq f$ pointwise.

Case 3: If $f(a)>f(2)$, then set $f_a$ to be the function
\[f_a:=2||f||_\infty \1_A \hs +\hs (f(a)-2||f||_\infty)\1_{\left\{a,2\right\}} \hs +\hs (f(2)-f(a))\1_{\left\{b,2\right\}}.\]
We have $f_a\in X$, $f_a(a)=f(a)$ and $f_a(2)=f(2)$. Furthermore, since $f(a)-f(2)\leq f(a)\leq ||f||_\infty$, we have $f_a(b)=2||f||_\infty+f(2)-f(a)\geq ||f||_\infty$ and $f_a(t)=2||f||_\infty\geq f(t)$ for $t\in A\ohne\left\{a,b,2\right\}$, i.e.\ $f_a\geq f$ pointwise.

To sum up, for every $a\in[0,1]$ we found a function $f_a\in X$ such that $f\leq f_a$ and $f_a(a)=f(a)$. Moreover, $f_a(2)=f(2)$. We obtain $f=\inf\left\{f_a \mid| a\in [0,1]\right\}$ in $B(A)$ and therefore $f=\inf\left\{g\in X \mid| f\leq g\right\}$. This establishes that $B(A)$ is a vector lattice cover of $X$.
Notice that $X$ is not pervasive, since there does not exist an element $x\in X$ with $0< x\leq \1_{\left\{2\right\}} \in B(A)$.

Next, we present a directed o-closed ideal $I$ in $X$, which is not s-closed.
Let $S:=\left\{\1_{\left\{x,2\right\}} \mid| x\in [0,1]\right\}\sub X$ and let $I$ be the linear hull of $S$, i.e.\ $I:=\lin\hs S$. It is immediate that $I$ is a directed ideal. Moreover, $I$ is not s-closed, since $\disp\sup\left\{\1_{\left\{t,2\right\}}\mid| t\in[0,1]\right\}=\1_{A} \notin I$. Note that $I^\t{d}=\left\{0\right\}$ and thus $I$ is not a band.

It is left to show that $I$ is o-closed. If an $x\in X$ does not belong to $I$, then it has the form $x=\mu\1_{A}+z$ with $\mu\neq 0$ and $z\in I$. We consider only the element $x=\1_{A}+0$ and establish that there is no net in $I$ which order converges to $\1_{A}+0$. Other elements $x\in X\ohne I$ can be treated similarly. This then implies that the ideal $I$ is o-closed.

Suppose, on the contrary, that a net $(x_\alpha)_{\alpha\in\mathbb{A}}$ in $I$ o-converges to $\1_{A}$. Then the net $(x_\alpha(2))_{\alpha\in\mathbb{A}}$ of reals converges. 
We show that $(x_\alpha(2))_{\alpha}$ diverges to establish a contradiction.
As $x_\alpha\xrightarrow{o} \1_A$, there exists a net $(y_\alpha)_{\alpha\in\mathbb{A}}$ in $X$ such that $\pm(\1_A-x_\alpha)\leq y_\alpha$ and $y_\alpha\downarrow 0$.
For every $\alpha$ the element $x_\alpha$ can be written as a sum of finitely many elements in $I$. That is, for every $\alpha$ there is a finite subset $N_\alpha$ of $[0,1]$ such that
\begin{equation}\label{ex1.eq1}
x_\alpha = \sum_{r\in N_\alpha}x_\alpha(r)\cdot\1_{\left\{r,2\right\}}.
\end{equation}
Fix an $\alpha$. Since for every $t\in[0,1]\ohne N_\alpha$ we have $x_\alpha(t)= 0$, it follows for such $t$ that $\pm(\1_A(t)-x_\alpha(t))=\pm 1\leq y_\alpha(t)$. Since $N_\alpha$ is finite, we conclude that $y_\alpha$ has a representation $y_\alpha=\mu_\alpha \1_A + z_\alpha$ with $\mu_\alpha\in\RR_{\geq 1}$ and $z_\alpha\in I$.
We show
\begin{equation}\label{ex1.eq2}
\forall \eps>0 \hs\forall r\in[0,1]\hs\exists\alpha\in\mathbb{A}\colon y_\alpha(r)\leq \eps.
\end{equation}
Suppose the contrary, i.e.\ 
\begin{equation}\label{ex1.eq3}
\exists\eps>0 \hs\exists r\in[0,1] \hs\forall\alpha\in\mathbb{A}\colon y_\alpha(r)\geq \eps.
\end{equation}
For an $\tilde{r}\in[0,1]\ohne\left\{r\right\}$ define the element $s=\eps \1_{\left\{r,2\right\}}-\eps\1_{\left\{\tilde{r},2\right\}}\in X$. Then for every $\alpha$ we have $s \leq y_\alpha$. That is, $s$ is a lower bound of the set $\left\{y_\alpha\mid| \alpha\in\mathbb{A}\right\}$. Since $s\not\leq 0$ we obtain a contradiction to $y_\alpha\downarrow 0$. Thus our assumption \eqref{ex1.eq3} is false. From \eqref{ex1.eq2} and  $y_\alpha\downarrow$ it follows
\begin{equation}\label{ex1.eq4}
\forall\eps> 0\hs \forall r\in[0,1]\hs \exists \alpha\in\mathbb{A}\hs \forall \beta\geq \alpha\colon y_\beta(r)\leq\eps.
\end{equation}
Fix an $\eps>0$ and a finite set $R\sub[0,1]$. We obtain that for every $r\in R$ there exists some $\alpha_r$ such that for every $\beta\geq\alpha_r$  we have $\pm (\1_A -x_\beta)(r)\leq y_\beta(r)\leq\eps$. This implies $x_\beta(r)\in[1-\eps,1+\eps]$.
Let $\tilde{\alpha}_R\in\mathbb{A}$ be an upper bound of the finite set $\left\{\alpha_r\mid|r\in R\right\}$. Then for every $r\in R$ and for every $\beta\geq \tilde{\alpha}_R$ we have $x_\beta(r)\in[1-\eps,1+\eps]$.
We conclude that for every finite set $R\sub[0,1]$ we have
\begin{equation}\label{ex1.eq5}
\exists\tilde{\alpha}_R\in\mathbb{A}\hs \forall \beta\geq \tilde{\alpha}_R\hs \forall r\in R\colon x_\beta(r)\geq 1-\eps.
\end{equation}
Fix an $\alpha\in\mathbb{A}$ and let $R:=N_\alpha$ as in \eqref{ex1.eq1}. Due to \eqref{ex1.eq5}
for every $\beta\geq \tilde{\alpha}_{N_\alpha}$ we have
\begin{equation*}
x_\beta(2) = \sum_{r\in N_\beta} x_\beta(r)\geq \t{card}(N_\beta)\cdot (1-\eps).
\end{equation*}
Next we show that there exists a subsequence $(x_n)_n$ of the net $(x_\alpha)_\alpha$ such that
\begin{equation}\label{ex1.eq6}
\forall n\in\NN\hs \exists\gamma\in\mathbb{A}\colon \t{card}(N_\gamma)\geq n.
\end{equation}
Then for every $n\in\NN$ there exists some $\tilde{\alpha}_{N_\gamma}\in\mathbb{A}$ such that for every $\beta\geq\tilde{\alpha}_{N_\gamma}$ we have $x_\beta(2)\geq n\cdot (1-\eps)$. This then implies that the net $(x_\alpha(2))_\alpha$ diverges, i.e.\ the ideal $I$ is o-closed.

To show that there exists a subsequence $(x_n)_n$ of $(x_\alpha)_\alpha$ with \eqref{ex1.eq6}, fix an $n\in\NN$ and let $N\sub[0,1]$ be such that $\t{card}(N)=n$. By \eqref{ex1.eq4} for every $\eps>0$ and every $r_k\in N$, where $k=1,\ldots,n$, there exists some $\alpha_k$ such that for every $\beta\geq\alpha_k$ we have $y_\beta(r_k)\leq\eps$. Fix an $\eps\in\hs]0,1[$ and let $\gamma$ be an upper bound of $\left\{\alpha_k\mid|k=1,\ldots,n\right\}$. Then for every $r_k\in N$ we have $y_\gamma(r_k)\leq\eps$.
As $x_\alpha\xrightarrow{o}\1_A$, it follows $\pm(\1_A-x_\gamma)(r_k)=\pm(1-x_\gamma(r_k))\leq y_\gamma(r_k)\leq\eps$. Due to $\eps\in\hs]0,1[$ this yields $x_\gamma(r_k)\neq 0$ for every $r_k\in N$. That is, for $x_\gamma$ we have $N\sub N_\gamma$. To sum up, there exists some $\gamma$ with $\t{card}(N_\gamma)\geq n$. Therefore the net $(x_\alpha(2))_\alpha$ of reals diverges and thus the ideal $I$ is o-closed.
\end{example}
\smallskip

From Corollary~\ref{1.4} we know that in pervasive Archimedean pre-Riesz spaces every directed band is s-closed. The next example demonstrates that this is not true in arbitrary pre-Riesz spaces.
\begin{example}\label{bands_not_s-closed}
\textit{In an Archimedean pre-Riesz space a directed band need not be s-closed.}

This example is similar to Example~\ref{ex1}.
Consider on the interval $A:=[-1,1]\cup\left\{2\right\}\sub\RR$ the vector lattice $B(A)$ of bounded functions on $A$. Let $X$ be the linear subspace of $B(A)$ which is generated as the linear hull of the characteristic functions $\1_{A}$ and $\1_{\left\{a,2\right\}}$ for every $a\in [0,1]$ and the function
\[h(t):= \begin{cases} 2t +2 \quad \t{ for } t\in[-1,-\frac{1}{2}] \\
					 -2t \quad\quad \t{ for } t\in[-\frac{1}{2},0] \\
					 0 \quad\quad\quad\t{ elsewhere.}\end{cases}\]
Endowed with pointwise order, $X$ is a directed and Archimedean ordered vector space and therefore a pre-Riesz space. Similarly to Example~\ref{ex1} one can see that $X$ is not a vector lattice.
We show that the following linear subspace of $B(A)$ is a vector lattice cover of $X$:
\[Y:=\left\{c\1_{A} + \lambda h + \sum_{k=1}^{n}\lambda_k\1_{\left\{a_k\right\}}\in B(A)\mid| \lambda,c,\lambda_k\in\RR,\hs a_k\in[0,1]\cup\left\{2\right\} \right\}.\]
First we show that $X$ is order dense in the ordered vector space $Y$. To that end, let $f\in Y$ with $f=c\1_{A} + \lambda h + \sum_{k=1}^{n}\lambda_k \1_{\left\{a_k\right\}}$, where $a_k \in [0,1]\cup\left\{2\right\}$ are pairwise different elements. We can assume $f\geq 0$ or else translate $f$ by adding the function $n\1_{A}\in X$ for a sufficiently large $n\in\NN$.
To write $f$ as an infimum of a subset of $X$, we will define a family of functions. Let $a, b \in[0,1]$ such that $a\neq b$ and let
\[f_a:=2||f||_\infty \1_A \hs +\hs \lambda h \hs +\hs (f(a)-2||f||_\infty)\1_{\left\{a,2\right\}} \hs +\hs (f(2)-f(a))\1_{\left\{b,2\right\}}.\]
Then $f_a\in X$ and we have $f_a(a) = f(a)$, $f_a(2) = f(2)$,
\begin{align*}
& f_a(b) = 2||f||_\infty+f(2)-f(a)\geq 2||f||_\infty-f(a)\geq ||f||_\infty\geq f(b) \hs\t{ and}\\
& f_a(t)=2||f||_\infty + \lambda h(t)\geq f(t) \hs\t{ for }\hs t\in A\ohne\left\{a,b,2\right\}.
\end{align*}
Moreover, the function
\[f_{-1} := c\1_A\hs +\hs \lambda h \hs +\hs \sum_{k=1}^{n}\lambda_k\1_{\left\{a_k,2\right\}} + \max\left\{f(2),2\sum_{i=k}^{n}|\lambda_k|\right\} \1_{\left\{0,2\right\}}\]
belongs to $X$ and we have $f_{-1}(t) = f(t)$ for $t\in A \ohne\left\{0,2\right\}$,
\begin{align*}
& f_{-1}(2)=c+ \sum_{i=k}^{n}\lambda_k +\max\left\{f(2), 2\sum_{i=k}^{n}|\lambda_k|\right\} \geq c+ \max\left\{f(2), \sum_{i=k}^{n}|\lambda_k|\right\} \geq f(2),\\
& f_{-1}(0) = c + \gamma + \max\left\{f(2),2\sum_{i=k}^{n}|\lambda_k|\right\} \geq c +\gamma = f(0),\\
&\qquad\qquad\t{ where }\hs \gamma = \lambda_k \hs\t{ in case }\hs a_k =0, \hs\t{ and otherwise } \gamma=0.
\end{align*} 

Thus we found for every $a\in [0,1]\cup\left\{-1\right\}$ a function $f_a\in X$ such that $f\leq f_a$. Moreover, for every $t\in[-1,1]$ we have $f_{-1}(t) = f(t)$  and for $a\in [0,1]$ we have $f_a(2)=f(2)$. It follows
$f=\inf\left\{f_a\in X\mid| a\in[0,1]\cup\left\{-1\right\}\right\}$ in $Y$,
i.e.\ $X$ is order dense\footnote{Note that though the order in $Y$ is pointwise, the supremum and infimum are \emph{not} pointwise, in general. The details can be seen below in the computation of the supremum of the two functions $g_1$ and $g_2$.} in $Y$.

We show that $Y$ is a vector lattice. To that end, we simplify the computations by considering the following subspace $Z$ of $B(A)$, consisting of restrictions of elements of $Y$ to $[-1,0[$, i.e.\
\[Z=\left\{f\in B(A) \mid| f=c\1_{[-1,0[} + \lambda h\right\}.\]
We show that $Z$ is a vector lattice. Let $g_1, g_2 \in Z$ be given by $g_1= c_1 \1_{[-1,0[} + \lambda_1h$ and $g_2 = c_2 \1_{[-1,0[} + \lambda_2h$ for some $c_1,c_2,\lambda_1,\lambda_2\in\RR$. We can assume $g_1$, $g_2$ to be positive.
We show that the function $s\in Z$, given by
\[s:=\max\left\{c_1,c_2\right\}\1_{[-1,0[} + \left(\max\left\{c_1+\lambda_1, c_2+\lambda_2\right\} - \max\left\{c_1,c_2\right\}\right)\cdot h,\]
is the supremum of $g_1$ and $g_2$.
Observe that $s(-1) = \max\left\{c_1,c_2\right\} \geq c_1 = g_1(-1)$. Further on,
$s(-\frac{1}{2})
 = \max\left\{c_1+\lambda_1, c_2+\lambda_2\right\} \geq c_1 +\lambda_1 = g_1(\textstyle{-\frac{1}{2}})$
and analogously $s(-1) \geq g_2(-1)$ and $s(-\frac{1}{2}) \geq g_1(-\frac{1}{2})$. Since $g_1,g_2$ and $s$ are affine linear between $t=-1$ and $t=-\frac{1}{2}$ and symmetric, it follows $g_1,g_2\leq s$.
Let $u\in Z$ be an upper bound of $g_1$ and $g_2$. We establish that $s\leq u$.
Due to $u\geq g_1, g_2$ we have $u(-1)\geq g_1(-1) = c_1$ and $u(-1)\geq g_2(-1) = c_2$, i.e.\
\begin{align*}
u(-1)\geq\max\left\{c_1,c_2\right\} =s(-1).
\end{align*}
Moreover, $u(-\frac{1}{2}) \geq g_1(-\frac{1}{2}) = c_1+\lambda_1 h(-\frac{1}{2}) = c_1+\lambda_1$ and $u(-\frac{1}{2}) \geq g_2(-\frac{1}{2}) =c_2+\lambda_2$,
i.e.\ $u(-\tfrac{1}{2}) \geq \max\left\{c_1+\lambda_1, c_2+\lambda_2\right\}= s(-\tfrac{1}{2})$.
The functions $s$ and $u$ are affine linear in $[-1,-\tfrac{1}{2}]$ and symmetric. Thus $u(-1)\geq s(-1)$ and $u(-\tfrac{1}{2})\geq s(-\tfrac{1}{2})$ imply $u\geq s$.
We obtain $s=g_1\Sup g_2$ in $Z$. That is, $Z$ is a vector lattice. Observe that, in general, $s$ is not a pointwise supremum of $g_1$ and $g_2$ in $Z$.

From this it immediately follows that $Y$ is a vector lattice. Indeed, consider the supremum of two elements in $Y$ as the supremum in $Z$, but additionally pointwise for $t\in [0,1] \cup \left\{2\right\}$. Notice that $X$ is not pervasive in $Y$, since there does not exist an $x\in X$ with $0\leq x\leq \1_{\left\{2\right\}} \in Y$.

Now let $B\sub X$ be the ideal generated by all functions of the form $\1_{\left\{a,2\right\}}$, where $a\in [0,1]$. Notice that $B$ is directed. We show that $B$ is a band. To that end, we first compute $B^{\t{d}}$. Recall that for $x_1,x_2\in X$ we have $x_1 \perp x_2$ in $X$ if and only if $x_1\perp x_2$ in $Y$. Let $x\in X$ and $\1_{\left\{a,2\right\}}\in B$. Let $r \in Y$ be the infimum of $|x|$ and $\1_{\left\{a,2\right\}}$. It follows for every $t\in[-1,1]$ that $r(t) \leq |x|(t)$ and $r(t)\leq\1_{\left\{a,2\right\}}(t)$. Therefore for every $t\in[0,1]\ohne\left\{a\right\}$ we have $r(t) \leq 0$. Since $0$ is a lower bound of $|x|$ and $\1_{\left\{a,2\right\}}$, it is $0 \leq r(t)$, i.e.\ $r(t) = 0$ for $t\in[0,1]\ohne\left\{a\right\}$. The only elements in $X$, which are zero on $[0,1]\ohne\left\{a\right\}$ are linear combinations of $h$ and $\1_{\left\{a,2\right\}}$. Since $a$ is arbitrary in $[0,1]$, it follows that multiples of $h$ are the only elements in $X$ which are disjoint to all elements of $B$, i.e.\ in $X$ we have $B^{\t{d}} = \left\{\lambda h \mid| \lambda\in\RR \right\}$. With a similar argumentation we obtain $B\dd= B$, therefore $B$ is a band.

We show that $B$ is not s-closed. Let namely $s:=\1_A -h \in X_+\ohne B$. Consider the set $M:=B\cap[0,s]$. We show that $\sup M$ exists in $X$ and $\sup M=s$. For every $a\in [0,1]$ it is $\1_{\left\{a,2\right\}}\in M$, therefore $M\neq\varnothing$. We establish that $s$ is the least upper bound of $M$. Let $r\in X$ be an upper bound of $M$, such that
\begin{align}\label{ex_band_not_sclosed_3}
r = c\1_A +\lambda h +\sum_{k=1}^n \lambda_k \1_{\left\{a_k,2\right\}}
\end{align}
with pairwise different elements $a_k\in [0,1]$.
For every $t\in [0,1]$ we have $\1_{\left\{t,2\right\}}\in M$. It follows
$r(t)\geq \1_{\left\{t,2\right\}}(t)=1$ for every $t\in [0,1]$ and $r(2)\geq 1$. Since the sum in \eqref{ex_band_not_sclosed_3} contains only finitely many summands, this leads to $c\geq 1$. For $t\in[-1,0[$ we have $r(t) \geq 0$ and therefore $\lambda \geq -1$. To sum up, we have
\begin{align*}
r(t)\geq 1 \t{ for } t\in[0,1]\cup\left\{2\right\} \quad\t{ and }\quad c\geq 1,\hs \lambda \geq -1.
\end{align*}
For $t\in[0,1]\cup\left\{2\right\}$ it follows $s(t) = 1 \leq r(t)$. Moreover, $s(-1)=1 \leq c = r(-1)$ and $s(-\frac{1}{2}) = 0 \leq 1 + \lambda \leq c+\lambda = r(-\frac{1}{2})$. Since $s$ and $r$ are affine linear on $[-1,-\frac{1}{2}]$ and since $h$ is symmetric, we obtain 
$s(t) \leq r(t)$ for $t\in[-1,0[$. This establishes $s\leq r$. Consequently, $s=\sup M=\sup(B\cap[0,s])$ in $X$. However, $s\notin B$, i.e.\ $B$ is not s-closed.
\end{example}

\section{Additional Examples}
We provide two further examples concerning bands in pre-Riesz spaces.
Since for the definition of an s-closed ideal only the positive part of the ideal is important, it is a natural question to ask, whether every s-closed ideal is automatically directed.
Example~\ref{Namioka3} shows that this is not the case, even under strong additional conditions on the space.
Example~\ref{3} provides an instance of a pervasive pre-Riesz space without RDP in which every o-closed directed ideal is a band. This suggests the conjecture that in Theorem~\ref{2.1} the condition of RDP might be dropped.
\begin{example}\label{Namioka3}
\textit{In an Archimedean pervasive pre-Riesz space with RDP and an order unit an s-closed band need not be directed.}

We return to Examples \ref{Namioka1} and \ref{Namioka2}. There we considered the pervasive pre-Riesz space $X=\left\{f\in C([-1,1]) \mid| f(0) = \frac{f(-1)+f(1)}{2}\right\}$ which has the RDP and an order unit. In Example~\ref{Namioka1} we introduced the ideal $I:=\left\{ f\in X \mid| f([-\textstyle{\frac{1}{2}},0]\cup\left\{-1,1\right\}) = 0 \right\}$ and established in Example~\ref{Namioka2} that the band
$B:=I\dd=\left\{f\in X \mid| f([-\textstyle{\frac{1}{2}},0]) = 0\right\}$ is not directed.
We show that $B$ is s-closed. To that end, let $z\in X_+$ be such that $z=\sup\left(B\cap[0,z]\right)$. We establish $z\in B$.
First, observe that for $f\in B_+$ we have $f(-1), f(1)\geq 0$ and from $0=f(0)=\frac{f(-1)+f(1)}{2}$ it follows $f(-1)=f(1)=0$, i.e.\ $f\in I_+$. That is, $I_+ = B_+$. It follows
\[z=\sup\left(B\cap[0,z]\right) = \sup\left(I\cap[0,z]\right).\]
Since by Example~\ref{Namioka1} the ideal $I$ is s-closed, we obtain $z\in I\sub B$. Thus the band $B$ is s-closed.
\end{example}

In Theorem~\ref{2.1} we established that every o-closed directed ideal $I$, with $I\dd$ directed, is a band if the pre-Riesz space is pervasive and has the RDP.
The following example suggests that the condition of RDP in Theorem~\ref{2.1} might be dispensable.
\begin{example}\label{3}
\textit{A pervasive pre-Riesz space without RDP in which every o-closed directed ideal is a band.}

Consider the vector lattice of piecewise affine continuous functions (finitely many pieces) on the interval $[-1,1]$, i.e.\ 
\[\t{PA}[-1,1] := \left\{ f\in C[-1,1] \mid| f \t{ piecewise affine, continuous} \right\}.\]
Let $q\in C[-1,1]$, $q(t) = t^2$. The ordered vector space
\[X:=\t{PA}[-1,1]\oplus\left\{\lambda q \mid| \lambda \in\RR\right\}=\left\{ f+\lambda q \mid|f \in \t{PA}[-1,1], \lambda\in\RR  \right\}\]
is a subspace of $C[-1,1]$ and has an order unit, e.g.\ $\1_{[-1,1]}$. Thus $X$ is Archimedean and directed and therefore pre-Riesz.
Observe that $\t{PA}[-1,1]$ is a vector sublattice of $C[-1,1]$ and by \c[VIII.4.7]{Werner} dense in $C[-1,1]$ with respect to the supremum norm.
Due to $\t{PA}[-1,1]\sub X\sub C[-1,1]$ every element of $C[-1,1]$ can be approximated pointwise from above by elements of $X$. Therefore $X$ is order dense in $C[-1,1]$. Moreover, $X$ is pervasive, since the subspace $\t{PA}[-1,1]$ of $X$ is pervasive. 

We show that $X$ does not have the RDP. Suppose that $X$ has the RDP and let
\[x_1(t)= \begin{cases}0 &\quad t\in[-1,0]\\ t &\quad t\in\hs ]0,1] \end{cases}
\quad \t{ and } \quad
x_2(t)= \begin{cases}-t &\quad t\in[-1,0]\\ 0 &\quad t\in\hs ]0,1]. \end{cases}\]
Then $x_1, x_2 \in X_+$ and $q \leq x_1 + x_2$.
We show that there are no elements $a_1, a_2 \in X$ with $0\leq a_1 \leq x_1$, $0\leq a_2 \leq x_2$ and $q = a_1 + a_2$. Indeed, let $a_1 \in X$ be such that $0\leq a_1 \leq x_1$ and $a_1 = f + \lambda q$ with $f\in \t{PA}[-1,1]$, $\lambda \in \RR$ and $q= a_1+a_2$. Then $a_1(t) = 0$ for every $t\in [-1,0]$. This implies $\lambda = 0$ and so, $a_1 \in \t{PA}[-1,1]$. Similarly $a_2 \in \t{PA}[-1,1]$. Altogether, we have $a_1 + a_2 = q \in \t{PA}[-1,1]$, which is a contradiction. Therefore $X$ does not have the RDP.

Next we show that every directed o-closed ideal in $X$ is a band.
For an ideal $A$ in $X$ let $Z(A)=\left\{t\in[-1,1]\mid| x(t)=0 \t{ for every } x\in A\right\}$ be the set of zeroes of $A$. Notice that every element in $X$ is a linear combination $\lambda q+p$ of the quadratic function $q$, where $\lambda\in\RR$, with an element $p\in\t{PA}[-1,1]$. First we observe the following facts.

\textbf{Claim 1:} \textit{Let $p\in\t{PA}[-1,1]$ and $\lambda\neq 0$. Then $\lambda q +p$ has finitely many zeroes.} This follows immediately from the fact that $q$ is quadratic and $p$ is piecewise affine.
\textbf{Claim 2:} \textit{Let $A\sub X$, $A\neq\left\{0\right\}$, be an ideal with $Z(A)$ infinite. Then for every $p\in\t{PA}[-1,1]$ and $\lambda\neq 0$ we have $\lambda q +p\notin A^{\t{d}}$.}
Indeed, since $A\neq\left\{0\right\}$, there exists some $a\in A$, $a\neq 0$. We can assume that there is an $r\in [-1,1]$ with $a(r)> 0$. Due to the continuity of $a$ there is an open interval $U_r$ around $r$ such that $a(t)>0$ for every $t\in U_r$. From $a\perp A^{\t{d}}$ it follows that every $x\in A^{\t{d}}$ is zero on $U_r$. That is, $U_r\sub Z(A^{\t{d}})$. As for $\lambda\neq 0$ the function $\lambda q +p$ has finitely many zeroes, it follows $\lambda q +p\notin A^{\t{d}}$.

Next, to establish that every o-closed directed ideal in $X$ is a band, we consider four cases. In some cases we establish instead that if an ideal is a band, then it is not o-closed. Let $I$ be an ideal in $X$.

\textit{Case 1}: Let first $Z(I)$ be a singleton, i.e.\ $Z(I)=\left\{r\right\}$ for an $r\in[-1,1]$. Then due to $I\dd=X$ we have that $I$ is not a band. We show that $I$ is not order closed. To that end we find a sequence $(f_n)_{n\in\NN}$ in $I$ which o-converges to $\1_{[-1,1]}\in X\ohne I$. Define for every $n\in\NN$ a neighbourhood $U_n:=[r-\tfrac{1}{n},r+\tfrac{1}{n}]\cap[-1,1]$ of $r$. Consider for every $n\in\NN$ the mapping $f_n$ which is defined by $f_n(t)=0$ for $t\in U_{n+1}$ and by $f_n(t)=1$ for $t\in [-1,1]\ohne U_n$. The mapping $f_n$ can be extended continuously by an affine piece on $\left]r-\tfrac{1}{n},r+\tfrac{1}{n+1}\right[\cap[-1,1]$ and  on $\left]r-\tfrac{1}{n+1},r+\tfrac{1}{n}\right[\cap[-1,1]$, respectively (provided these sets are not empty). Let $n\in\NN$ be fixed. Clearly, $f_n\in \t{PA}[-1,1]$. We show $f_n\in I$. Then for every $n\in\NN$ we can set $y_n:=\1_{[-1,1]} - f_n\in X$ and due to $f_n\uparrow$ obtain $\pm(\1_{[-1,1]} - f_n)\leq y_n\downarrow 0$, which yields $f_n\xrightarrow{o} \1_{[-1,1]}\notin I$. This then implies that $I$ is not order closed.
To establish that for a fixed $n\in \NN$ we have $f_n\in I$, let $s\in [-1,1]\ohne\left\{r\right\}$. Then there exists a function $x_s\in I$ with $x_s(s)\neq 0$. We can assume that $x_s(s)>0$ and, since $I$ is a linear subspace of $X$, we can choose $x_s$ to satisfy $x_s(s)=2$. Due to $x_s$ being continuous, it follows that there is an open interval $U_s$ around $s$ such that $x_s(t)\geq 1$ for every $t\in U_s$.
Thus there exists a positive function $g_s\in \t{PA}[-1,1]$ such that $g_s$ has its support in $U_s$ and there is an open interval $V_s$ around $s$ such that $g_s(t)=1$ for $t\in V_s$ and $0\leq g_s(t) \leq 1$ for $t\in U_s$. Since for every $t\in[-1,1]$ we have $0\leq g_s(t)\leq |x_s(t)|$, it follows $g_s\in I$.
For an $\eps\in\left]0,1\right[$ let $V_0:=\left]-\eps,\eps\right[$. Then the set $\left\{V_s\mid|s\in[-1,1]\right\}$ is an open cover of the compact set $[-1,1]$ in the topological space $[-1,1]$ and thus has a finite subcover.
Let $S\sub[-1,1]$ be a finite set such that $\left\{V_s\mid| s\in S\right\}$ is a finite subcover of $[-1,1]$. Then the piecewise affine function $g_{\eps}:=\sum_{s\in S\ohne\left\{0\right\}} g_s$ belongs to $I_+$ and we have $g_\eps(t)\geq 1$ for every $t\in[-1,1]\ohne\left]-\eps,\eps\right[$. Choose $\eps>0$ such that $\left[r-\tfrac{1}{n+1},r+\tfrac{1}{n+1}\right]\sub \left]-\eps,\eps\right[$. Then it follows $0\leq f_n\leq g_\eps$, i.e.\ $f_n\in I$.

\textit{Case 2}: Let $Z(I)$ be finite. Then $I$ is not a band, as $I\dd=X$. Since every point in $Z(I)$ is an isolated point in $[-1,1]$, by a similar argumentation as in the previous case we find a net in $I$ which order converges to $\1_{[-1,1]}$. That is, $I$ is not o-closed. 

\textit{Case 3}: Let $Z(I)$ be infinite and $I\dd\neq X$. We show that if $I$ is o-closed, then $I$ is a band. To that end, we first establish that $I$ is a band in $\t{PA}[-1,1]$ if and only if $I$ is a band in $X$.
Let $\lambda q+ p\in X$ with $p\in\t{PA}[-1,1]$ and $\lambda\neq 0$.
Since $Z(I)$ is infinite, by Claim 1 it follows $\lambda q +p\notin I$.
The case $I=\left\{0\right\}$ is clear, therefore we can assume $I\neq\left\{0\right\}$.
By Claim 2 we obtain $\lambda q +p\notin I^{\t{d}}$.
Due to $I\dd\neq X$ we have $I^{\t{d}}\neq\left\{0\right\}$ and $Z(I^{\t{d}})$ is infinite.
Then Claim 2 yields $\lambda q +p\notin I\dd$.
To sum up, we established that $I\cap \t{PA}[-1,1]=I$, $I^{\t{d}}\cap \t{PA}[-1,1]=I^{\t{d}}$ and $I\dd\cap \t{PA}[-1,1]= I\dd$. That is, the quadratic part of $X$ is not involved into the procedure of taking the disjoint complement. It follows that $I$ is a band in $\t{PA}[-1,1]$ if and only if $I$ is a band in $X$. 
Next we show that if $I$ is o-closed in $X$, then $I$ is o-closed in $\t{PA}[-1,1]$. Since in the vector lattice $\t{PA}[-1,1]$ bands are precisely the o-closed ideals, this then establishes that if $I$ is o-closed in $X$, then $I$ is a band in $X$.
Let $I$ be not o-closed in $\t{PA}[-1,1]$. We need to show that $I$ is not o-closed in $X$. As $I$ is not o-closed in $\t{PA}[-1,1]$, there exists some $x\in\t{PA}[-1,1]\ohne I$ and a net $(x_\alpha)_\alpha$ in $I$ such that $x_\alpha\xrightarrow{o} x$ in $\t{PA}[-1,1]$. Due to $\t{PA}[-1,1]$ being order dense in $X$, by \c[Proposition~5.1~(ii)]{2} it follows $x_\alpha\xrightarrow{o} x$ in $X$. That is, $I$ is not o-closed in $X$.
We conclude that if $I$ is an o-closed ideal in $X$, then $I$ is a band.

\textit{Case 4}: Let $Z(I)$ be infinite and $I\dd= X$. Suppose first $I^{\t{d}}\neq\left\{0\right\}$. 
Then for $\lambda\neq 0$ and $p\in\t{PA}[-1,1]$ Claim 2 yields $\lambda q +p\notin I\dd =X$, a contradiction. Therefore we have $I^{\t{d}}=\left\{0\right\}$. 
We can assume that $Z(I)$ does not contain an isolated point. Indeed, if $Z(I)$ has an isolated point, then $I$ is not a band, and by a similar argumentation to the first case not o-closed. Moreover, $Z(I)$ has no inner points. If, on the contrary, $z$ is an inner point of $Z(I)$, then there is an open neighbourhood $U_z$ of $z$ with $U_z\sub Z(I)$. We can find a continuous piecewise affine function $a\in X$, $a\neq 0$, with support in $U_z$. Then $a\perp I$, a contradiction to $I^{\t{d}}=\left\{0\right\}$.

To sum up, we have the following conditions: $Z(I)$ is infinite, $I\dd= X$, $I^{\t{d}}=\left\{0\right\}$, $Z(I)$ consists of accumulation points and has no inner points. The ideal $I$ is not a band. Indeed, suppose $I=I\dd=X$, then $Z(I)=Z(X)=\varnothing$, a contradiction. We establish that $I$ is not o-closed. 
To that end, we first show that $Z(I)$ is contained in a Cantor set $C$. For the real numbers $a_{(0)}:=\inf Z(I)$ and $a_{(1)}=\sup Z(I)$ we have $a_{(0)}<a_{(1)}$. Since $Z(I)$ has no inner points, there exists some $m_{(0,1)} \in\left]a_{(0)},a_{(1)}\right[$ with $m_{(0,1)}\notin Z(I)$. Thus there is a function $x^{(1)}\in I$ such that $x^{(1)}(m_{(0,1)})\neq 0$. It follows that there is an open interval $U_{1}$ around $m_{(0,1)}$ such that $x^{(1)}(t)\notin Z(I)$ for every $t\in U_{1}$. We can choose the interval $U_{1}$ to be maximal in respect to the inclusion.
That is, since $Z(I)$ is infinite and does not contain isolated points, there exist $a_{(0,1)}, a_{(1,0)} \in Z(I)$ with $a_{(0)}\neq a_{(0,1)}$ and $a_{(1)}\neq a_{(1,0)}$ such that $x^{(1)}(t)\notin Z(I)$ for every $t\in U_{1}:=\left]a_{(0,1)}, a_{(1,0)}\right[$. 
We consider the next two intervals $[a_{(0)},a_{(0,1)}]$ and $[a_{(1,0)}, a_{(1)}]$. Since $Z(I)$ has no inner points, there exist $m_{(0,0,1)} \in\left]a_{(0)},a_{(0,1)}\right[$ and $m_{(1,0,1)} \in\left]a_{(1,0)}, a_{(1)}\right[$ with $m_{(0,0,1)}, m_{(1,0,1)}\notin Z(I)$.
We can proceed as above and remove from the two intervals $[a_{(0)},a_{(0,1)}]$ and $[a_{(1,0)}, a_{(1)}]$ a maximal open subinterval of $[-1,1]\ohne Z(I)$ around $m_{(0,0,1)}$ and $m_{(1,0,1)}$. We obtain four intervals and can proceed by induction to obtain a Cantor set $C$. As $Z(I)$ does not belong to the removed open intervals, we obtain that $Z(I)\sub C$. We can number the removed intervals (starting with $U_1$) and the appropriate functions in $I$ which do not vanish on these respective intervals (starting with $x^{(1)}$), e.g.\ in the order in which the intervals are removed. Denote the intervals by $U_n$ and the associated functions by $x^{(n)}$, where $n\in\NN$.

We proceed to establish that $I$ is not o-closed. 
To that end, we consider the following function $f\notin I$ and show that there is a sequence in $I$ which o-converges to $f$:
\[f(t)= \begin{cases}
	\tfrac{1}{a_{(0,1)}-a_{(0)}}t-\tfrac{1}{a_{(0,1)}-a_{(0)}}a_{(0)} &\quad t\in[a_{(0)},a_{(0,1)}],\\
	1 &\quad t\in U_1=\left]a_{(0,1)},a_{(1,0)}\right[,\\
	\tfrac{1}{a_{(1,0)}-a_{(1)}}t-\tfrac{1}{a_{(1,0)}-a_{(1)}}a_{(1)} &\quad t\in[a_{(1,0)},a_{(1)}],\\
	0 &\quad t\in[-1,1]\ohne[a_{(0)},a_{(1)}].
\end{cases}\]
Clearly, $f\in X$. Considering only the interval $U_{1}\sub[-1,1]$, we can approximate the constant one function $\1_{U_1}$ (which does not belong to $X$) from below by a sequence $(x_m^{(1)})_{m\in\NN}$ of elements in $I$ such that $x_m^{(1)}\uparrow$. Moreover, we can approximate not only constant functions, but similarly functions $f_{\upharpoonright U_n}$ which are restricted to the open intervals $U_n$, for every $n\in\NN$, respectively. Let for every $n\in\NN$ the sequence $(x_m^{(n)})_{m\in\NN}$ approximate $f_{\upharpoonright U_n}$ such that $x_m^{(n)}\uparrow_m $.
Then for the piecewise affine function $s_n:=\sum_{k=1}^n x_n^{(k)}$ we have $s_n\in I$ and $s_n\uparrow$. Clearly, $f$ is an upper bound of $s_n$. Moreover, we have
\begin{equation}\label{3.eq1}
\forall t\in [-1,1]\ohne Z(I)\colon \lim_{n\rightarrow\infty}s_n(t) = f(t).
\end{equation} 
Fix an element $z\in Z(I)$. Recall that $Z(I)$ does not contain intervals. Thus for every neighbourhood $U(z)$ of $z$ there exists some $t\in U(z)\ohne Z(I)$. Therefore there is a sequence $(t_j)_{j\in\NN}$ in $[-1,1]\ohne Z(I)$ such that $t_j\rightarrow z$. Since $f$ is continuous, it follows $|f(t_j)-f(z)|\rightarrow 0$. 
Due to \eqref{3.eq1} for every $j\in\NN$ we have $|s_n(t_j)-f(t_j)|\xrightarrow{n\rightarrow\infty} 0$. This yields
\begin{align*}
|s_n(t_j)-f(z)| &= |s_n(t_j)-f(t_j)+f(t_j)-f(z)|\leq\\
&\leq |f(t_j)-f(z)|+ |s_n(t_j)-f(t_j)|\xrightarrow{j,n\rightarrow\infty} 0.
\end{align*}
For every upper bound $u\in\left\{s_n\mid|n\in\NN\right\}^u$ we have $s_n(t_j)\leq u(t_j)$ for every $j\in\NN$. As $s_n$ and $u$ are continuous, we obtain $f(z)\leq u(z)$. Clearly, for $t\in [-1,1]\ohne Z(I)$ due to $s_n\uparrow$ and \eqref{3.eq1} we have $f(t)\leq u(t)$. 
It follows that $f$ is the least upper bound of $\left\{s_n\mid|n\in\NN\right\}^u$, i.e.\ $s_n\uparrow f\notin I$. This establishes that $I$ is not o-closed. 

To sum up, in $X$ we have either bands which are o-closed, or ideals which are not o-closed.
\end{example}

\begin{numremark}
\begin{enumerate}
\item Example~\ref{3} provides a simple instance of a pre-Riesz space which is pervasive, but does not have the RDP. The idea of this example can be used to construct other spaces of this kind. Until recently the only known example for a pervasive pre-Riesz space without RDP was the space $L^r(l_0^\infty)$ of regular operators on the space $l_0^\infty$ of eventually constant real sequences, see \c{AbrWick1991} and \c{3}.
\item Let $X$ and $X_0$ be pre-Riesz spaces such that $X_0\sub X$ and let $Y$ be a vector lattice cover of $X$ and $X_0$. Clearly, if $X_0$ is pervasive, then $X$ is likewise pervasive. Example~\ref{3} shows that a similar statement for RDP is not true: The space $X_0$ has the RDP, but $X$ does not. Here, $X_0=\t{PA}[-1,1]$ is even a vector lattice.
\end{enumerate}
\end{numremark}

\textbf{Acknowledgments:} The author thanks Anke Kalauch for her valuable comments and Onno van Gaans for his idea to investigate s-closed ideals and his contribution to Example~\ref{3}.


\bibliographystyle{plain}

\end{document}